\newcommand{\BF}{{\mathbb {F}}}
\newcommand{\BQ}{{\mathbb {Q}}}
\newcommand{\BZ}{{\mathbb {Z}}}
\newcommand{\CN}{{\mathcal {N}}}
\newcommand{\CL}{{\mathcal {L}}}
\newcommand{\CU}{{\mathcal {U}}}
\newcommand{\CP}{{\mathcal {P}}}
\newcommand{\CS}{{\mathcal {S}}}
\newcommand{\CT}{{\mathcal {T}}}
\newcommand{\RP}{{\mathrm {P}}}
\newcommand{\RU}{{\mathrm {U}}}
\newcommand{\ScE}{{\mathscr {E}}}
\newcommand{\ScS}{{\mathscr {S}}}
\newcommand{\ScT}{{\mathscr {T}}}
\newcommand{\tr}{{\mathrm{tr}}}
\newcommand{\triv}{{\mathds{1}}}
\newcommand{\Ad}{{\mathrm{Ad}}}
\newcommand{\GL}{{\mathrm{GL}}}
\newcommand{\Hom}{{\mathrm{Hom}}}
\newcommand{\GSp}{{\mathrm{GSp}}}
\newcommand{\Sp}{{\mathrm{Sp}}}
\newcommand{\sgn}{{\mathrm{sgn}}}
\newcommand{\Ind}{{\mathrm{Ind}}}
\newcommand{\cpair}[1]{\left\{{#1}\right\}}
\newcommand{\apair}[1]{\left\langle {#1} \right\rangle}
\newcommand{\Tab}{{\rm Tab}}
\newcommand{\ov}{\overline}
\def\bks{{\backslash}}
\def\diag{{\rm diag}}
\def\eps{{\epsilon}}
\def\lam{{\lambda}}
\def\Lam{{\Lambda}}
\def\sig{{\sigma}}
\theoremstyle{plain}
\newtheorem{thm}{Theorem}[section]
\newtheorem{lm}[thm]{Lemma}
\newtheorem{cor}[thm]{Corollary}
\newtheorem{pro}[thm]{Proposition}
\newtheorem{ex}[thm]{Example}
\title{
$\Sp_{2n}(\BF_{q^{2}})$-Invariants In Irreducible Unipotent  Representations of $\Sp_{4n}(\BF_{q})$}
\author{Lei Zhang}
\address{Department of Mathematics\\
Boston College\\
Chestnut Hill, MA 02467-3806}
\email{lei.zhang.2@bc.edu}
\subjclass[2010]{Primary 20G05, Secondary 20C15, 22E50}
\date{}
\keywords{Finite groups of Lie type, Unipotent Representations, Distinguished Representations, Symmetric Subgroups}
\begin{document}
\begin{abstract}
We show that for any irreducible representation of $\Sp_{4n}(\BF_{q})$, the subspace of all its $\Sp_{2n}(\BF_{q^{2}})$-invariants 
is at most one-dimensional.  
In terms of Lusztig symbols, we give a complete list of irreducible unipotent representations of $\Sp_{4n}(\BF_{q})$ which have a nonzero $\Sp_{2n}(\BF_{q^{2}})$-invariant and, in particular, we prove that every irreducible unipotent cuspidal representation has a  one-dimensional subspace of  $\Sp_{2n}(\BF_{q^{2}})$-invariants. As an application, we give an elementary proof of the fact that the unipotent cuspidal 
representation is defined over $\BQ$, which was proved by Lusztig in \cite{Lu02}. 

\end{abstract}
\maketitle
\tableofcontents
\section{Introduction}
Let $G$ be a reductive group defined over a field $k$ and $\theta$ be an involution of $G$ defined over $k$. Denote by $H$ the 
subgroup of all $\theta$-fixed points in $G$, called a {\it symmetric subgroup} of $G$.
One of the classical problems in harmonic analysis on {\it symmetric spaces} $G(k)/H(k)$ is to give a formula for the dimension of
the space $\Hom_{G(k)}(\pi,\Ind^{G(k)}_{H(k)}\triv)$ for each irreducible representation $\pi$ of $G(k)$ and 
the trivial representation $\triv$ of $H(k)$, or equivalently, for the dimension of the space of all $H(k)$-invariant linear functionals on the 
space of $\pi$. 
By Frobenius reciprocity, this dimension is equal to  the dimension $\dim \pi^{H(k)}$ of the subspace of  $H(k)$-invariants.

For example, let $k$ be a non-archimedean field of characteristic 0 and $\pi$ be an irreducible supercuspidal representation of $G(k)$ constructed by J.-K. Yu and J.-L. Kim. In \cite{HM08}, Hakim and Murnaghan  give a formula for the dimension of the space of such invariant linear functionals for all symmetric spaces. This formula reduces the computation of the dimension formula to the ``level zero part.'' For depth-zero supercuspidal representations, their formula reduces to the analogous problem over the residue field of $k$.
This is one motivation  to study such problems over finite fields.

Over a finite field $\BF_{q}$ of odd characteristic $p$,  by convention throughout this paper, we use $F$ to denote the 
Frobenius map  $F\colon G\to G$, corresponding to the $\BF_{q}$-structure of $G$, and
we use the inner product $\langle,\rangle_{G^{F}}$ to denote the dimension of the $G^{F}$-invariant $\Hom$-space.
 Such dimension formulas for symmetric spaces have been studied by Lusztig \cite{Lu90,Lu00}, Henderson \cite{Hen03} and others.
In \cite{Lu90}, Lusztig gives a dimensional formula for all Deligne-Lusztig virtual representations $R^{\lam}_{T}$
and  for all symmetric subgroups $H$, that is, for $\langle R^{\lam}_{T},\Ind^{G^{F}}_{H^{F}}\triv\rangle _{G^{F}}$. 
Further, applying this formula for $R^{\lam}_{T}$, Lusztig gives a dimension formula in the case when $G^{F^{2}}$ and $H=G^{F}$ in \cite{Lu00}, and Henderson \cite{Hen03} solves the problem in the case when $G=\GL$ and $H$ is any symmetric subgroup.

The objective of this paper is to consider the case that the symplectic group $G^{F}=\Sp_{4n}(\BF_q)$ and its symmetric subgroup 
$H^{F}=\Sp_{2n}(\BF_{q^{2}})$. 
Let $J_{2n}$ be the skew-symmetric matrix given by
$$
J_{2n}=\begin{pmatrix}
&w_{n}\\-w_{n}&
\end{pmatrix},
$$
where $w_{n}$ is the $n$-by-$n$ permutation matrix with the unit anti-diagonal. Take $G\subset \GL_{4n}$ to be  the  group preserving the symplectic form defined by $J_{4n}$.
Let $\theta$ be the adjoint map $\Ad(\varepsilon)$, where $\varepsilon$ is given by
$$
\varepsilon=\begin{pmatrix}
&I_{n}&&\\ \tau I_{n}&&&\\&&&I_{n}\\&&\tau I_{n}&
\end{pmatrix}
$$
and $\tau$ is a primitive element in $\BF_{q}$.
Since $\varepsilon^{2}$ is in the center of the symplectic similitude group $\GSp^{F}_{4n}$, $\theta$ is an involution of $G^{F}$ and
the symmetric subgroup $H$ is isomorphic to ${\rm Res}_{\BF_{q^{2}}/\BF_{q}}\Sp_{2n}$, i.e. $H^{F}=\Sp_{2n}(\BF_{q^{2}})$ where $\BF_{q^{2}}$ is the quadratic extension $\BF_{q}(\sqrt{\tau})$ of $\BF_{q}$.

First, we give an upper bound for the dimension of the subspace of $\Sp_{2n}(\BF_{q^{2}})$-invariants in this case.
In \cite{zhang10}, we prove that the symmetric pair $(G(k),H(k))$ defined over non-archimedean field $k$ is a {\it Gelfand pair},  that is,
$$
\dim\Hom_{G(k)}(\pi,\Ind^{G(k)}_{H(k)}\triv)\leq 1
$$
for all irreducible smooth representations of $G(k)$.
Using the calculation in \cite{zhang10}, we have
 \begin{thm} \label{thm:uniq}
For each irreducible representation $\pi$ of $\Sp_{4n}(\BF_q)$,
$$
\langle{\pi, \Ind^{\Sp_{4n}(\BF_q)}_{\Sp_{2n}(\BF_{q^{2}})}\triv}\rangle\leq 1.
$$
\end{thm}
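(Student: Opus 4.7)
The plan is to apply the Gelfand--Kazhdan criterion for finite groups: the pair $(G^{F}, H^{F})$ is a Gelfand pair --- equivalently, $\dim\pi^{H^{F}}\le 1$ for every irreducible representation $\pi$ of $G^{F}$ --- whenever there exists an anti-involution $\sigma$ of $G^{F}$ with $\sigma(H^{F}) = H^{F}$ and $\sigma(H^{F}gH^{F}) = H^{F}gH^{F}$ for every $g\in G^{F}$. The equivalence with the multiplicity bound is a standard consequence of the fact that the convolution algebra $\BZ[H^{F}\bks G^{F}/H^{F}]$ becomes commutative, so that $\Ind^{G^{F}}_{H^{F}}\triv$ decomposes without multiplicity.

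The natural candidate here is $\sigma(g) = \varepsilon g^{-1}\varepsilon^{-1}$, namely the composition of inversion with the involution $\theta = \Ad(\varepsilon)$. Since $H^{F}$ is by definition the $\theta$-fixed subgroup, $\sigma(H^{F}) = H^{F}$; the relation $\sigma^{2} = \mathrm{id}$ is immediate from the hypothesis that $\varepsilon^{2}$ lies in the centre of $\GSp^{F}_{4n}$; and a direct expansion shows $\sigma(xy) = \sigma(y)\sigma(x)$, so $\sigma$ is genuinely an anti-involution of $G^{F}$.

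The decisive step is to verify $\sigma(H^{F}gH^{F}) = H^{F}gH^{F}$ for every $g\in G^{F}$. This is precisely the geometric content established in \cite{zhang10} for the non-archimedean pair $(G(k),H(k))$. The proof there proceeds by (i) parameterising $H\times H$-orbits on $G$ via twisted symmetric / Hermitian-form invariants coming from the Cartan-like decomposition of the symmetric variety $G/H$, and (ii) checking orbit by orbit, on explicit block-matrix representatives, that $\sigma$ sends each orbit to itself. Both ingredients are algebraic and field-independent; transferring them to $\BF_{q}$ is exactly the ``calculation in \cite{zhang10}'' invoked by the author. Once $\sigma$-stability of the double cosets is in hand, the Gelfand--Kazhdan criterion gives the stated bound.

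The main hurdle I would anticipate is the $\BF_{q}$-rational bookkeeping of the double-coset decomposition. Over a local field the orbit invariants ultimately live in a residue-field object, so no new orbits appear, but over $\BF_{q}$ itself one has to track square classes in $\BF_{q}^{\times}/(\BF_{q}^{\times})^{2}$ that can refine the orbit picture of the symmetric variety. The key point is that $\sigma$ fixes each such rational orbit (because it already fixes every geometric orbit, and it commutes with the Galois action used to cut out the rational points), so the refined decomposition is still $\sigma$-stable coset by coset, and the multiplicity-one conclusion of Theorem~\ref{thm:uniq} follows.
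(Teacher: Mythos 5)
Your proposal is correct and takes essentially the same approach as the paper: the paper likewise deduces the theorem from the Gelfand trick, namely from the double-coset identity $H^{F}gH^{F}=H^{F}g^{-1}H^{F}$ for all $g\in G^{F}$, verified by transferring the calculation of \cite{zhang10} from the non-archimedean setting to $\BF_{q}$ (with the details omitted there as well). The only cosmetic difference is that the paper uses plain inversion as the anti-involution rather than your $\varepsilon$-twisted version $g\mapsto\varepsilon g^{-1}\varepsilon^{-1}$; both implement the same commutativity argument for the Hecke algebra $\BZ[H^{F}\backslash G^{F}/H^{F}]$.
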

Indeed, this can be deduced from $H^{F}gH^{F}=H^{F}g^{-1}H^{F}$ for all $g\in G^{F}$ over the finite field $\BF_{q}$ 
and the calculation whose analogue over non-archimedean local fields was carried out \cite{zhang10}. 
It is easy to check that such a calculation holds for a finite field $\BF_{q}$ of odd characteristic as well.
We will omit the details in this paper.

Our goal is to give a complete list of irreducible representations of $G^F$ which have a non-zero $H^{F}$-invariant, which are simply called the 
{\it distinguished representations}.
In this paper, we will focus on {\it unipotent representations} of $G^{F}$, which occur as components of a Deligne-Lusztig virtual representation $R^{\triv}_{T}$, where $T$ is an $F$-stable maximal torus of $G$ and $\triv$ is its trivial character.
For a general exposition, see Lusztig \cite{Lu81, Lu84}.

In this case,  unipotent representations are parametrized by symbols of rank $n$ and odd defect.
In detail,
if $G$ is $F$-split, the $F$-stable maximal tori of $G^{F}$ can be parametrized by conjugacy classes of the Weyl group $W$ of $G$, we use $R^{\triv}_{T_{w}}$ to denote a Deligne-Lusztig virtual representation.
First, unipotent representations can be divided into families by
\begin{equation}\label{eq:c}
R(c)=\frac{1}{|W|}\tr(w,c)R^{\triv}_{T_{w}}.
\end{equation}
Here $c$ is a {\it cell} defined by Lusztig, which is a virtual representation of $W$.
In addition, $R(c)$ is an actual representation of $G^{F}$ (not necessarily irreducible), with the smallest number of irreducible 
constituents one can expect from a linear combination of $R^{\triv}_{T_{w}}$'s.
In the $\GL_{n}$ case, a cell $c$ is  just an irreducible module of the symmetric group $S_{n}$ and $R(c)$ is an irreducible representation of $G^{F}$.
There is a one-to-one correspondence between irreducible unipotent representations of $\GL^{F}_{n}$ and irreducible $S_{n}$-modules.
However,  this is not true for the other finite groups of Lie type.
In the symplectic and odd orthogonal group cases, the irreducible modules of Weyl groups are parametrized by symbols of defect 1 as a subset of symbols of odd defect.

Using Lusztig's formula in \cite{Lu90}, we have a dimension formula for $R^{\triv}_{T}$ and then obtain a $W$-module $\Xi$ such that for $w\in W$
$$
\tr(w,\Xi)=\langle R^{\triv}_{T_{w}}, \Ind^{G^{F}}_{H^{F}}\triv \rangle.
$$
If $G=\GL$, $\Xi$ is always an actual representation of $S_{n}$ and it is enough to decompose the module $\Xi$ into irreducibles.
Once one has the multiplicities of the irreducibles, the multiplicity of the corresponding unipotent representation  of $G$ follows.
From the above discussion, this is not enough for the other reductive groups and creates  difficulties in our case 
in comparison to the general linear group case.
The issue is that even if we know the dimension formula for $R^{\lam}_{T}$, there is still more work to be done 
in order to obtain the dimension formula for all irreducible unipotent representations.
In addition to this issue, the $W$-module $\Xi$ is usually {\it not} an actual representation in our case and 
it requires extra work here to decompose $\Xi$ into irreducibles.

Define $\CU(\Ind^{G^{F}}_{H^{F}}\triv)$ to be the sub-representation of $\Ind^{G^{F}}_{H^{F}}\triv$ whose irreducible sub-representations are unipotent representations of $G^{F}$. 
Our purpose is to decompose the representation $\CU(\Ind^{G^{F}}_{H^{F}}\triv)$ into irreducible representations of $G^{F}$. 
The strategy is to decompose the virtual representation $\Xi_{n}$ of $W_{2n}$, the Weyl group of $\Sp_{4n}(\BF_{q})$, as
$$
\Xi_{n}=\sum_{r=0}^{n}\sum_{\beta\vdash n-r}\sum^{r}_{i=1}
(-1)^{i}\binom{(i,2r-i)\cdot \beta}{\beta}
$$
where pairs of partitions $\binom{\alpha}{\beta}$ parametrize irreducible representations of $W_{2n}$ and $(i,2r-i)\cdot \beta$ corresponds to an induced product character of $S_{2r}\times S_{2n-2r}$.
By the Littlewood-Richardson rule and combinatorial arguments, we divide the irreducibles of $\Xi_{n}$ into a family of cells $c(Z,\Phi_{Z},\hat{\Phi}_{Z})$ such that
$$
\langle \Xi_{n}, c(Z,\Phi_{Z},\hat{\Phi}_{Z})\rangle=2^{d}
$$
where $d$ is the number of singles in the special symbol $Z$. By the definition of cells, the associated representation $R(c(Z,\Phi_{Z},\hat{\Phi}_{Z}))$ of $G^{F}$ has exactly $2^{d}$ constituents. Since the multiplicity of each constituent is at most one, every constituent is distinguished in this cell. Therefore, all unipotent representations parametrized by symbols in $c(Z,\Phi_{Z},\hat{\Phi}_{Z})$ are distinguished and hence we have
\begin{thm}\label{thm:list}
$$
\CU(\Ind^{\Sp_{4n}(\BF_{q})}_{\Sp_{2n}(\BF_{q^{2}})}\triv)=\sum_{Z\in  \CS_{n}} R(c(Z,\Phi_{Z},\hat{\Phi}_{Z})).
$$
In particular, the unipotent cuspidal representation of $\Sp_{4n}(\BF_{q})$
has  nontrivial $\Sp_{2n}(\BF_{q^{2}})$-invariants.
\end{thm}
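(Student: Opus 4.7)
The plan is to execute the four-stage strategy outlined in the introduction, with a short post-processing step for the cuspidal statement. First, I would apply Lusztig's dimension formula from \cite{Lu90} to compute $\langle R^{\triv}_{T_{w}}, \Ind^{G^{F}}_{H^{F}}\triv\rangle$ for every conjugacy class $w \in W_{2n}$. This reduces to a double-coset calculation on $H^{F}\backslash G^{F}/T_{w}^{F}$, essentially the finite-field analogue of the computation alluded to in the proof-sketch of Theorem~\ref{thm:uniq}. Packaging the resulting integers as class-function values on $W_{2n}$ defines the virtual module $\Xi_{n}$; determining which tori contribute nontrivially and simplifying the resulting sum would yield the explicit expansion
$$
\Xi_{n}=\sum_{r=0}^{n}\sum_{\beta\vdash n-r}\sum^{r}_{i=1}(-1)^{i}\binom{(i,2r-i)\cdot \beta}{\beta}
$$
where $(i,2r-i)\cdot \beta$ denotes the $S_{n+r}$-character induced from $(i,2r-i)\boxtimes \beta$ on $S_{2r}\times S_{n-r}$.

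Second, I would decompose $\Xi_{n}$ into irreducibles of $W_{2n}$. For fixed $r$ and $\beta\vdash n-r$, the Littlewood--Richardson rule expands $(i,2r-i)\cdot \beta$ as a nonnegative integer combination of $S_{n+r}$-irreducibles $\alpha$. The crucial combinatorial step is to execute the alternating sum $\sum_{i}(-1)^{i}$ inside this expansion, which I expect to collapse via a sign-reversing involution on Littlewood--Richardson tableaux; the surviving bipartitions $\binom{\alpha}{\beta}$ should satisfy an interlacing-type constraint between $\alpha$ and $\beta$, and each should appear in $\Xi_{n}$ with coefficient $+1$.

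Third, I would regroup these surviving bipartitions according to Lusztig's cell decomposition. Under the standard bijection between bipartitions and symbols of defect~$1$ and rank~$n$, the nonzero irreducible constituents of $\Xi_{n}$ should partition into disjoint cells indexed by the special symbols $Z\in\CS_{n}$. The auxiliary parameters $\Phi_{Z}$ and $\hat{\Phi}_{Z}$ would be constructed so that each cell $c(Z,\Phi_{Z},\hat{\Phi}_{Z})$ collects exactly $2^{d}$ of these irreducibles, where $d$ is the number of singles in $Z$, yielding $\langle \Xi_{n}, c(Z,\Phi_{Z},\hat{\Phi}_{Z})\rangle = 2^{d}$. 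This matching is, in my view, the main obstacle: one must verify that the alternating combinatorics on pairs of partitions is compatible with the symbol-based cell structure, which will require a delicate case analysis on singles versus paired entries of~$Z$.

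Finally, I would combine this count with formula \eqref{eq:c} and Theorem~\ref{thm:uniq}. By the general theory of cells, $R(c(Z,\Phi_{Z},\hat{\Phi}_{Z}))$ is a genuine unipotent representation whose $2^{d}$ irreducible constituents each occur with multiplicity one in it. The computation $\langle \Xi_{n}, c(Z,\Phi_{Z},\hat{\Phi}_{Z})\rangle = 2^{d}$ then gives the total multiplicity of these constituents inside $\Ind^{G^{F}}_{H^{F}}\triv$, while Theorem~\ref{thm:uniq} bounds each individual multiplicity above by~$1$. Equality is therefore forced constituent by constituent, so every unipotent representation in $c(Z,\Phi_{Z},\hat{\Phi}_{Z})$ is distinguished, and summing over $Z\in \CS_{n}$ yields the claimed decomposition. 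The cuspidal assertion follows immediately: each unipotent cuspidal of $\Sp_{4n}(\BF_{q})$ is known to form a singleton family attached to one of the special symbols $Z\in\CS_{n}$, so it appears as the unique constituent of exactly one of the cells above and is therefore distinguished.
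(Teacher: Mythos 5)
Your outline follows the same route as the paper (compute $\Xi_{n}$ from Lusztig's formula, decompose it by the Littlewood--Richardson rule, regroup the constituents into cells, and finish with Theorem~\ref{thm:uniq}), but two of your concrete expectations are false, and one of them sits at the heart of the argument. The irreducible constituents of $\Xi_{n}$ do \emph{not} all occur with coefficient $+1$: as in Lemma~\ref{lm:Xi-even}, the constituent $\binom{\alpha}{\beta}$ with $\alpha/\beta\in\Gamma^{\circ}_{2}$ occurs with sign $(-1)^{|v(\alpha/\beta)|/2}$, and already $\Xi_{1}=\ov{\rho}_{S}(2)-\ov{\rho}_{S}(1^{2})+\binom{1}{1}$ has a negative term (this is exactly the sense in which $\Xi$ fails to be an actual representation). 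These signs cannot be made to disappear by a sign-reversing involution; they are precisely what the subset $\hat{\Phi}_{Z}$ (the pairs $(\lambda_{i},\mu_{i})$ with $\lambda_{i}-\mu_{i}$ odd) is chosen to match, because the cell is itself the signed sum $\sum_{\Psi\subset\Phi_{Z}}(-1)^{|\hat{\Phi}_{Z}\cap\Psi|}\Lambda(\Psi)$. Under your hypothesis that all coefficients equal $+1$, the pairing $\langle\Xi_{n},c(Z,\Phi_{Z},\hat{\Phi}_{Z})\rangle$ could only reach $2^{d}$ if $\hat{\Phi}_{Z}=\emptyset$, and that choice is incompatible with the true signs of $\Xi_{n}$; so the key count $\langle\Xi_{n},c(Z,\Phi_{Z},\hat{\Phi}_{Z})\rangle=2^{d}$, and with it your final multiplicity-forcing step, would collapse. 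The termwise matching of the sign $(-1)^{|v(\alpha/\beta)|/2}$ with $(-1)^{|\Psi\cap\hat{\Phi}_{Z}|}$ is the main content of the paper's proof and is missing from your plan.

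Your justification of the cuspidal assertion is also wrong: unipotent cuspidal representations of symplectic groups never form singleton families. For $2n=d^{2}+d$ the cuspidal has symbol $\binom{0,1,\dots,2d}{-}$ of defect $2d+1$ and lies in the family of the special symbol $Z=\binom{0,2,\dots,2d}{1,3,\dots,2d-1}$, all $2d+1$ of whose entries are singles; it is only one of the $2^{d}$ constituents of $R(c(Z,\Phi_{Z},\hat{\Phi}_{Z}))$, as one sees in Example~\ref{ex:Sp4}, where $R(c(Z,\Phi_{1},\Phi_{1}))=\rho\binom{0,1}{2}+\theta_{10}$, and in~\eqref{eq:Sp12}. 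The correct deduction is to observe that $\CL^{-1}(Z)$ is a pair of equal partitions, so its skew diagram is empty and trivially an even horizontal strip, whence $Z\in\CS_{n}$; since the cuspidal symbol occurs among the constituents $\Lambda_{i}$ attached to this cell, its distinction then follows from the main identity together with the $2^{d}$ count and Theorem~\ref{thm:uniq}. As stated, your last step rests on a structural claim about families that is false for symplectic groups rather than on this verification.
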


This result has applications to the theory of irreducible admissible representations of $\Sp_{4n}(k)$ over a non-archimedean field $k$ 
and to the theory of automorphic representations of $\Sp_{4n}$ over number fields. In \cite{zhang13S}, we 
apply Theorem \ref{thm:list} and construct a family of distinguished representations of $\Sp_{4n}(k)$. Further applications including 
the determination of associated number-theoretic invariants such as $L$-functions and Arthur parameters and the construction 
of cuspidal automorphic forms which have a non-zero period integral associated to the symmetric pair will be considered in our 
forth-coming work. 

In this paper, we give an application of Theorem \ref{thm:list} to prove the rationality of unipotent representations 
of symplectic groups. This extends  the work of Lusztig \cite[Section 3]{Lu02} to all unipotent cuspidal representations for symplectic groups.
We remark that our proof takes an elementary approach without using the Hasse principle.
By Theorem~\ref{thm:list}, the unipotent cuspidal representation of $G^{F}$ is multiplicity-free in the $\BQ[G^{F}]$-module $\Ind^{G^{F}}_{H^{F}}\triv$.
Since the characters of unipotent representations are $\BQ$-valued,  the unipotent cuspidal representation of $G^{F}$ can be realized by a $\BQ$-module. Hence we obtain 
\begin{cor}\label{cor:ration}
All unipotent cuspidal representations of symplectic groups over finite fields of odd characteristic are defined over $\BQ$.
\end{cor}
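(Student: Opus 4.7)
The plan is to combine Theorem~\ref{thm:list} with Theorem~\ref{thm:uniq} and the known rationality of the character of the unipotent cuspidal representation. Write $G^{F}=\Sp_{4n}(\BF_{q})$, $H^{F}=\Sp_{2n}(\BF_{q^{2}})$, and let $\pi_{c}$ denote the unipotent cuspidal representation of $G^{F}$. Theorem~\ref{thm:list} asserts that $\pi_{c}$ occurs as a constituent of $\Ind^{G^{F}}_{H^{F}}\triv$, while Theorem~\ref{thm:uniq} guarantees multiplicity at most one, so the multiplicity is exactly one. This already covers every unipotent cuspidal of every symplectic group: a unipotent cuspidal of $\Sp_{2m}(\BF_{q})$ exists only when $m=d(d+1)$ for some $d\geq 0$, which is always even, so every such representation lives in some $\Sp_{4n}$ treated by Theorem~\ref{thm:list}.

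The induced representation carries an obvious $\BQ$-rational model, namely the permutation module $V_{\BQ}:=\BQ[G^{F}/H^{F}]$ of $\BQ$-valued functions on the coset space, with $\Ind^{G^{F}}_{H^{F}}\triv\simeq V_{\BQ}\otimes_{\BQ}\ov{\BQ}$ as $\ov{\BQ}[G^{F}]$-modules. The character $\chi_{\pi_{c}}$ is $\BQ$-valued: this follows from Lusztig's description of unipotent characters of classical groups as explicit signed sums of Deligne--Lusztig virtual characters $R^{\triv}_{T}$, each of which takes values in $\BQ$. Consequently the primitive central idempotent
$$
e_{\pi_{c}}=\frac{\dim \pi_{c}}{|G^{F}|}\sum_{g\in G^{F}}\chi_{\pi_{c}}(g^{-1})\,g
$$
already lies in $\BQ[G^{F}]$. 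Setting $W_{\BQ}:=e_{\pi_{c}}V_{\BQ}\subset V_{\BQ}$ gives a $G^{F}$-stable $\BQ$-subspace whose scalar extension $W_{\BQ}\otimes_{\BQ}\ov{\BQ}$ is exactly the $\pi_{c}$-isotypic component of $\Ind^{G^{F}}_{H^{F}}\triv$. By the multiplicity-one assertion above, this isotypic component is a single copy of $\pi_{c}$, so $W_{\BQ}$ is a $\BQ$-rational model for $\pi_{c}$, proving the corollary.

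The substantive ingredient is Theorem~\ref{thm:list}; the rest is standard descent. In effect, multiplicity one forces the Schur index of $\pi_{c}$ over $\BQ$ to be trivial, and rationality of $\chi_{\pi_{c}}$ supplies the central idempotent that cuts out the rational model. The only point requiring outside input, and the mildest possible obstacle, is the rationality of $\chi_{\pi_{c}}$, which can be cited directly from Lusztig's work on characters of classical groups; no Hasse-principle argument is needed.
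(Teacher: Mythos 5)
Your argument is essentially the paper's own proof: the paper deduces the corollary from Theorem~\ref{thm:list} plus Theorem~\ref{thm:uniq} (so the cuspidal unipotent representation occurs with multiplicity exactly one in the $\BQ[G^{F}]$-module $\Ind^{G^{F}}_{H^{F}}\triv$) together with the $\BQ$-rationality of unipotent character values, and your explicit descent via the central idempotent $e_{\pi_{c}}\in\BQ[G^{F}]$ acting on the permutation module $\BQ[G^{F}/H^{F}]$ is just the standard way of making that deduction concrete. Your observation that $d^{2}+d$ is always even, so every symplectic group possessing a unipotent cuspidal representation is of the form $\Sp_{4n}$, correctly justifies the ``all symplectic groups'' phrasing, which the paper leaves implicit. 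One caveat: your stated reason for the rationality of $\chi_{\pi_{c}}$ is not right. The cuspidal unipotent character is \emph{not} a (signed, or any) linear combination of the Deligne--Lusztig characters $R^{\triv}_{T}$: it is not a uniform function (e.g.\ $\theta_{10}$ for $\Sp_{4}$ is the classical example), so rationality cannot be deduced by writing it as an explicit sum of the $R^{\triv}_{T}$. The fact that unipotent characters of classical groups take values in $\BQ$ is true and should simply be cited (from Lusztig's work, as the paper does); since you also offer that citation as a fallback, the proof stands, but the purported mechanism should be dropped.
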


The case of  non-unipotent distinguished representations  of $G^{F}$ is also of interest. 
This will be considered in a sequel to this paper. 
In that case we need to deal with  unipotent representations of {\it even orthogonal groups} and {\it general linear groups}. 
Since the center of symplectic groups is {\it disconnected}, many arguments given here may be substantially changed.

This paper is organized as follows. In Section~\ref{sec:uni}, for the benefit of the reader, we  recall
Lusztig's classification theory of unipotent representations of symplectic groups (see Lusztig \cite{Lu81} or Carter \cite{Car93}), and Lusztig's general formula for $R^{\lam}_{T}$ in \cite{Lu90}.
 In Section~\ref{sec:Xi}, we decompose the virtual $W_{2n}$-module $\Xi_{2n}$ into well-understood $W_{2n}$-modules. In Section~\ref{sec:main}, we give an explicit combinatorial description of unipotent representations in terms of Lusztig symbols.

{\it Acknowledgement.} I thank Professor George Lusztig for comments and suggestions. He suggested that this classification of the distinguished representations could be applied to prove the rationality of unipotent cuspidal representations, which is our Corollary~\ref{cor:ration}.
I also thank Professors Dihua Jiang, Fiona Murnaghan, Solomon Friedberg and C. Ryan Vinroot for their useful communications and comments on topics related to this paper.
The research in this paper was motivated by my Ph.D. thesis project in University of Minnesota under the supervision of Professor Dihua Jiang.

{\bf Notation.}
Denote by $S_{n}$ the symmetric group of degree $n$.
We also consider $w_{n}$ as  a longest Weyl element of $S_{n}$.

Let $\alpha$ be a partition of $n$ with parts $\alpha_{1},\ \alpha_{2},\dots,\ \alpha_{m}$.
 Its size, length, and transpose are denoted by $|\alpha|$, $\ell(\alpha)$, and $\alpha'$.
 The multiplicity of $i$ as a part of $\alpha$ is denoted by $m_{i}(\alpha)$ or  $m_{i}$.

If $G$ is a group and $g\in G$, denote by $Cl_{G}(g)$ the conjugacy class of $g$ in $G$ and $Z_{G}(g)$ the centralizer of $g$ in $G$.
Let $\BZ_{n}$ be a cyclic group of order $n$.

\section{Unipotent Representations of the Symplectic Group}  \label{sec:uni}

In this section, we shall recall the Deligne-Lusztig  representations in \cite{DL76}  and Lusztig's classification of irreducible representations of symplectic groups over  finite fields.

Let $G$ be  a connected reductive group defined over a finite field $\BF_q$ of odd characteristic $p$ and $T$ be an $F$-stable maximal torus of $G$.
Let $\lambda$ be  a character  of $T^F$ over $\bar{\BQ}^{\times}_l$, where $\bar{\BQ}_{l}$ is an algebraic closure of $\BQ_l$ and $l$ is prime to $q$.
For each pair $(T,\lambda)$, Deligne and Lusztig \cite{DL76} attached a virtual representation $R^{\lam}_T$ of $G^F$.

First, there is a bijection between geometric conjugacy classes (see Definition 5.5 in \cite{DL76}) of the pairs $(T,\lambda)$ and semi-simple conjugacy classes in ${G^{*}}^{F}$.
Here  $G^{*}$ is the dual group of $G$  defined over $\BF_{q}$.
Further, referring to \cite[Section 7.5]{Lu77}, one has a bijection between $G^{F}$-conjugacy classes $(T,\lambda)$ and ${G^{*}}^{F}$-conjugacy classes $(T',s)$, with $T'$ an $F$-stable maximal torus in $G^{*}$ and $s\in T'^{F}$. We also use $R^{s}_{T}$ to denote $R^{\lam}_{T}$ for the corresponding pair $(T,\lam)$.

Let $\ScE(G)$ be the set of isomorphism classes of irreducible representations of $G^F$ over $\bar{\BQ}_l$. Let $\ScE(G,s)$ be the subset of $\ScE(G)$, consisting of all irreducible representations $\rho$ such that $\apair{\rho,R^{s}_{T}}\neq 0$ for some $T$.
One has a partition
 $$\ScE(G)=\coprod_{s}\ScE(G,s)$$
  where $s$ runs through the set of semi-simple ${G^{*}}^{F}$-conjugacy classes in ${G^{*}}^{F}$.

An irreducible representation $\rho$ of $G^{F}$ is \emph{unipotent} if $\apair{\rho,R^{\triv}_{T}}_{G^{F}}\neq 0$ for some $F$-stable maximal  torus $T$, that is, $\rho$ is in $\ScE(G,1)$.
In \cite{Lu84}, Lusztig parametrized all the unipotent representations of reductive groups. We refer to  the parametrization of classical groups in Lusztig \cite{Lu77}.

\subsection{Unipotent Representations of Symplectic Groups}

In this section, we recall the classification of  unipotent  representations of symplectic groups
and  Lusztig symbols. See Lusztig \cite{Lu81} or Carter \cite{Car93}.
A \emph{symbol} is an unordered pair $\Lam=\left(\begin{smallmatrix}S\\T\end{smallmatrix}\right)$, where $S$ and $T$ are finite sets consisting of non-negative integers.
In this paper, the symbol is reduced, i.e. $0\notin S\cap T$.
The \emph{rank} $\mathrm{rk}(\Lam)$ of $\Lam$ is  defined by
$$
\mathrm{rk}(\Lam)=\sum_{\lam\in S}\lam+\sum_{\mu\in T}\mu-\left[\left(\frac{\#S+\#T-1}{2}\right)^2\right],
$$
where the square bracket is the greatest integer  function.
Define by the \emph{defect} $\mathrm{def}(\Lam)=|\#S-\#T|$.
 Obviously, the rank of a symbol is a non-negative integer and $rk(\Lambda) \geq \left[\left(\mathrm{def}(\Lam)/2\right)^2\right]$. 
Lusztig used symbols to parametrize all unipotent representations of $G^{F}$ as follows. 
\begin{pro}[{\cite[Theorem 8.2]{Lu77}}] \label{thm:uni_sym}
For  the symplectic group, there exists a one-to-one correspondence between the unipotent representations and the set of symbol classes of rank $n$ and odd defect. Moreover, the symbol $\Lam$ with $\mathrm{rk}\Lam=[(\mathrm{def}(\Lam)/2)^{2}]$ corresponds to the cuspidal unipotent representation.
\end{pro}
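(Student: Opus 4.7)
The plan is to reconstruct Lusztig's classification by passing through the Weyl group and the structure of families. Since $G = \Sp_{2n}$ is $F$-split, every unipotent representation appears as a constituent of some $R^{\triv}_{T_w}$, and by the orthogonality relations of Deligne--Lusztig characters their multiplicities in the various $R^{\triv}_{T_w}$ are, on the \emph{uniform} part, controlled by class functions on $W = W(C_n)$. Now the irreducible representations of $W(C_n)$ are labeled by ordered pairs of partitions $(\alpha,\beta)$ with $|\alpha|+|\beta|=n$, and each such bipartition translates into a symbol of rank $n$ and defect $1$ via the standard $\beta$-number construction (padding so that $\ell(\alpha)=\ell(\beta)+1$ and setting $S=\{\alpha_i+(a+1-i)\}$, $T=\{\beta_j+(a-j)\}$). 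This is the ``principal series'' parametrization, and already accounts for all symbols of defect $1$.

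Second, I would argue that the set of unipotent representations of $G^F$ is strictly larger than $\widehat{W}$: by Lusztig's theory they are grouped into \emph{families}, each indexed by a special symbol $Z$ of defect $1$ and rank $n$. To $Z$ one attaches a finite elementary abelian $2$-group $\mathcal{F}_Z \cong (\BZ/2\BZ)^{d(Z)}$, where $d(Z)$ is the number of singles of $Z$, and the irreducibles in the family are parametrized by pairs $(x,\sigma) \in \mathcal{F}_Z\times\widehat{\mathcal{F}_Z}$ modulo diagonal conjugation. Applying Lusztig's non-abelian Fourier transform, this labeling is recast combinatorially: the irreducibles in the family of $Z$ are in bijection with the symbols of rank $n$ and odd defect obtained from $Z$ by shuffling singles between the two rows (keeping the underlying multiset of entries fixed). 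Taking the disjoint union over all special symbols of rank $n$ then yields exactly the set of symbols of rank $n$ and odd defect.

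Third, for the cuspidality criterion I would use the Howlett--Lehrer description of Harish-Chandra series. Every proper Levi of $\Sp_{2n}$ has the form $\prod_i \GL_{n_i}\times\Sp_{2m}$, and on symbols, Harish-Chandra induction from $\GL_k\times\Sp_{2(n-k)}$ corresponds to the operation of adding a pair $\{a,b\}$ with $a\in S$ and $b\in T$ (so that removing such a pair realizes the reverse restriction). Consequently, $\Lam$ is cuspidal iff no such pair can be removed, which forces one of the two rows to be empty and the other to be the initial segment $\{0,1,\dots,d-1\}$, where $d=\mathrm{def}(\Lam)$. Plugging this into the rank formula gives
$$
\mathrm{rk}(\Lam)=\frac{d(d-1)}{2}-\left(\frac{d-1}{2}\right)^{\!2}=\frac{d^{2}-1}{4}=\left[\left(\frac{d}{2}\right)^{\!2}\right],
$$
and one checks directly from the formula for $\mathrm{rk}$ that this value is the minimum possible rank for a reduced symbol of defect $d$; hence the criterion is both necessary and sufficient.

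The main obstacle is the second step: making the passage from the ``uniform part'' over $W$ to the genuine list of irreducibles requires Lusztig's deep results from \cite{Lu84} on almost characters, the explicit decomposition of each $R(c)$ into irreducibles via the Fourier transform on $\mathcal{F}_Z$, and the verification that the resulting combinatorial labels are precisely the symbols of rank $n$ and odd defect. The first and third steps, by contrast, are essentially bookkeeping once the family structure is in hand.
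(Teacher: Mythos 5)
This proposition is not proved in the paper at all: it is quoted verbatim as Lusztig's classification (\cite[Theorem 8.2]{Lu77}, with the surrounding combinatorics recalled from \cite{Lu81} and \cite{Car93}), so there is no internal proof to compare against. Judged on its own terms, your outline is a reasonable roadmap but not a proof: every load-bearing step (the partition of unipotent representations into families, the non-abelian Fourier transform parametrization from \cite{Lu84}, the compatibility of Harish-Chandra induction with symbols) is exactly the deep content of Lusztig's theorem, and you acknowledge in your last paragraph that step two is the real obstacle. Note also that the historical proof in \cite{Lu77} does not go through families at all (those appear only later, in \cite{Lu84}); it is an induction on Harish-Chandra series: the series attached to the cuspidal unipotent representation of $\Sp_{2(d^{2}+d)}$ has relative Weyl group of type $B_{n-d^{2}-d}$, its constituents are parametrized by bipartitions, and these are encoded as the symbols of rank $n$ and defect $2d+1$; the defect is the series invariant and cuspidality becomes the minimal-rank condition.

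Two concrete inaccuracies should be fixed even in the sketch. First, the group attached to a family is not $(\BZ/2\BZ)^{d(Z)}$ with $d(Z)$ the number of singles: the number of singles of a special symbol of odd defect is odd, say $2d+1$, and the associated group is $(\BZ/2\BZ)^{d}$, so that $|\mathcal{F}_{Z}\times\widehat{\mathcal{F}_{Z}}|=2^{2d}$ matches the $2^{2d}$ symbols (unordered redistributions of the singles) in the family; with your convention the count is off. Second, your rule that Harish-Chandra induction ``adds a pair $\{a,b\}$ with $a\in S$, $b\in T$'' is not the correct combinatorics. Since the only cuspidal unipotent representation of a general linear factor is the trivial representation of $\GL_{1}$, cuspidality need only be tested against induction from $\GL_{1}\times\Sp_{2(n-1)}$, which on symbols corresponds to increasing a single entry of one row by $1$ (the Pieri-type rule at the level of the relative Weyl group). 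With that rule, a cuspidal symbol must have each row an initial segment $\{0,1,\dots\}$, and reducedness ($0\notin S\cap T$) forces one row to be empty, giving $\binom{0,1,\dots,2d}{-}$ and the rank value $[(\mathrm{def}/2)^{2}]$; your rank computation itself is correct, but it rests on an induction rule you have not justified and which, as stated, is wrong.
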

For example,
the unipotent cuspidal representation of $\Sp_{2(d^{2}+d)}(\BF_{q})$ corresponds to the symbol
$$
\begin{pmatrix}
0,1,2,\dots,2d\\
-
\end{pmatrix}.
$$
For each symbol $\Lam$ of rank $n$ and odd defect we denote by $\rho(\Lam)$ the corresponding unipotent representation of $G^{F}$.

A symbol
$$Z=\begin{pmatrix}z_0, z_2,\cdots,z_{2m}\\
z_1,z_3,\cdots,z_{2m-1}
\end{pmatrix}$$
 of rank $n$ and defect 1 is called \emph{special symbol} if $z_0\le z_1\le z_2\le\cdots\le z_{2m-1}\le z_{2m}$ holds.
 Assume that the number of singles in $Z$ (the elements which occur only once) is $2d+1$. Let $\Phi$ be an arrangement of the $2d+1$ singles in $Z$ into $d$ pairs and one isolated element. An arrangement is called {\it admissible} if it satisfies the following condition: there is a pair consisting of consecutive $z$'s and then the  arrangement is still admissible for the new special symbol so-obtained by  removing this pair.

Let $\Phi$ be an admissible arrangement of the special symbol $Z$. If $\Psi$ is a subset of $\Phi$, denote by $\Psi^{*}$ the set of $z$'s in $\Psi$ occurring in the first row of $Z$ and $\Psi_{*}$ the set of $z$'s in $\Psi$ occurring in the second row of $Z$. Let $Z_{0}$ be the set of $z$'s in $Z$ occurring twice.
For any subset $\hat{\Phi}$ of $\Phi$, define a {\it virtual cell} of $W_{n}$, the Weyl group of $\Sp_{2n}(\BF_{q})$, by
$$
c(Z,\Phi,\hat{\Phi})=\sum_{\Psi\subset \Phi}(-1)^{|\hat{\Phi}\cap \Psi|}\begin{pmatrix}
Z_{0}\coprod \Psi_{*}\coprod (\Phi-\Psi)^{*}\\
Z_{0}\coprod \Psi^{*}\coprod (\Phi-\Psi)_{*}
\end{pmatrix},
$$
where $\Psi$ runs through over all subsets of $\Phi$.
This virtual cell $c(Z,\Phi,\hat{\Phi})$ is considered as a virtual representation of $W_{n}$. 
Indeed, each term in $c(Z,\Phi,\hat{\Phi})$ is a symbol of rank $n$ and defect 1 and not special except $\Psi=\emptyset$. 
There is a one-to-one correspondence between the symbol classes of rank $n$ and defect 1 and the irreducible representations of $W_{n}$ referring to Section~\ref{sec:weyl}. Those symbols of defect 1 are also considered as irreducible representations of $W_{n}$.

Similar to~\eqref{eq:c}, we have a unipotent representation $R(c(Z,\Phi,\hat{\Phi}))$ of $G^{F}$
$$
R(c(Z,\Phi,\hat{\Phi}))=\frac{1}{|W_{2n}|}\tr(w,c(Z,\Phi,\hat{\Phi}))R^{\triv}_{T_{w}}.
$$
 Referring to Lusztig's classification theory, we have
$$
R(c(Z,\Phi,\hat{\Phi}))=\sum^{2^{d}}_{i=1}\rho(\Lam_{i}),
$$
where $\Lam_{i}$ is uniquely determined by the virtual cell $c(Z,\Phi,\hat{\Phi})$. More details may be found in Lusztig \cite{Lu81}.

\begin{ex}\label{ex:Sp4}
Let us give an example of unipotent representations of $\Sp_{4}(\BF_{q})$.
All symbol classes of rank 2 and odd defect are
$$
\left\{ \begin{pmatrix} 2\\-\end{pmatrix},
\begin{pmatrix} 1,2\\0\end{pmatrix},
\begin{pmatrix} 0,2\\1\end{pmatrix},
\begin{pmatrix} 0,1\\2\end{pmatrix},
\begin{pmatrix} 0,1,2\\1,2\end{pmatrix}
\right\}
\text{ and }
\left\{\begin{pmatrix} 0,1,2\\-\end{pmatrix}\right\}.
$$
The symbols in the first set are of defect 1 and also  corresponding to all irreducible representations of $W_{2}$. The symbol in the second set is of defect 3 and corresponding to the unipotent cuspidal representation, that is, $\theta_{10}$ in Srinivasan's notation \cite{Sri68}.
Further,
$$
R\binom{2}{-} \text{ and }
R\binom{0,1,2}{1,2}
$$
are irreducible unipotent irreducible representations of $\Sp_{4}(\BF)$ and the first one is the trivial representation. For the special symbol $Z=\binom{0,2}{1}$, $\Phi_{1}=\{(0,1)\}$ and $\Phi_{2}=\{(1,2)\}$ are the only two admissible arrangements and for instance
\begin{equation}\label{eq:Sp4}
R(c(Z,\Phi_{1},\Phi_{1}))=R\binom{0,2}{1}-R\binom{1,2}{0}=\rho\binom{0,1}{2}+\theta_{10}.
\end{equation}
For additional complementary examples, see Carter \cite[Chapter 13]{Car93}.
\end{ex}

\begin{ex}\label{ex:Sp8}
Let $G^{F}=\Sp_{12}(\BF_{q})$ and $Z=\binom{0,2,4}{1,3}$ be a special symbol whose number of singles is 5. Then $\Phi=\{(0,1),(2,3)\}$ is an admissible arrangement. We have
\begin{equation}\label{eq:Sp12}
R(c(Z,\Phi,\Phi))=
\rho\binom{2,3,4}{0,1}+
\rho\binom{0,3,4}{1,2}+
\rho\binom{0,1,2,3}{4}+
\rho\binom{0,1,2,3,4}{-}.
\end{equation}
\end{ex}

\subsection{Irreducible Representations of the Weyl Group $W_{n}$} \label{sec:weyl}

In this section, we recall the construction of irreducible representations of $W_{n}$ in \cite[\S 2]{Lu81}.

Let $S_{n}$ be the symmetric group of degree $n$ acting on the set
 $
 \ScS_{n}=\{1,2, \cdots, n\}.
 $  
 Let $\alpha$ be a partition of $n$.
Denote by $\rho_{S}(\alpha)$ the irreducible module of $S_{n}$ whose restriction to $S_{\alpha_1} \times S_{\alpha_2} \times \cdots S_{\alpha_{\ell(\alpha)}} \subset  S_n$  contains the trivial character and whose restriction to $S_{\alpha'_1} \times S_{\alpha'_2} \times \cdots\times S_{\alpha'_{\ell(\alpha')}} \subset  S_n$ contains the Steinberg character.
All irreducible modules of $S_{n}$ are parameterized by partitions of $n$ in this way. For example, $\rho_{S}(n)$ is the trivial character and $\rho(1^{n})$ is the Steinberg character.

Let  $W_n$ act on the set
$$\ScT _n=\{1,2, \cdots, n, n', \cdots, 2', 1'\},$$
and be generated by $(i,i+1)(i',(i+1)')$ for $1\leq i\leq n-1$ and by $(n,n')$.
Let $\CP_{n}$ be the set of all pairs of partitions $(\alpha;\beta)$ such that $|\alpha|+|\beta|=n$. Then irreducible modules of $W_{n}$ are parametrized by the pairs in $\CP_{n}$.
Indeed, let $(\alpha;\beta)$ be in $\CP_{n}$,
 and $\rho_{S}(\alpha)$ and $\rho_{S}(\beta)$ be the irreducible representations of $S_{|\alpha|}$ and $S_{|\beta|}$.
 Since $W_m$ is isomorphic to $S_{m}\ltimes (\BZ_{2})^{m}$,  there are natural liftings $\ov{\rho}_S(\alpha)$ and $\ov{\rho}_S(\beta)$ as representations of $W_{|\alpha|}$ and $W_{|\beta|}$.
  Define a quadratic character $\chi$ of $W_n$ via
 \begin{equation*}
 \chi_n(w)=(-1)^{\#\{w(1),w(2),\cdots,w(n)\} \cap\{1',2',\cdots,n'\}}.
 \end{equation*}
 Then the induced representation
 $$
 \binom{\alpha}{\beta}:=\Ind^{W_n}_{W_{|\alpha|}\times W_{|\beta|}}\ov{\rho}_S(\alpha)\otimes(\ov{\rho}_S(\beta)\otimes \chi_{|\beta|})
 $$ is an irreducible representation of $W_n$   corresponding to the ordered pair $(\alpha;\beta)$.

By adding zeroes, we can increase the lengths of $\alpha $ and $\beta$ at  will  and order $0\leq \alpha_{i}\leq \alpha_{i+1}$ for $1\leq i\leq m+1$ and $0\leq \beta_{i}\leq \beta_{i+1}$ for $1\leq i\leq m$.
By convention, at least one of $\alpha_{1}$ and $\beta_{1}$ is not zero.
 Set $\lambda_i=\alpha_i+i-1$ for $1\leq i \leq m+1$ and  $\mu_i=\beta_i +i -1$ for $1\leq i \leq m$ and then we obtain a symbol of rank $n$  and defect 1
\begin{equation*}
\CL\binom{\alpha}{\beta}:=\begin{pmatrix}
\lambda_1,\lam_2,\cdots,\lam_{m+1}\\
\mu_1,\mu_2,\cdots,\mu_m
\end{pmatrix}.
\end{equation*}
Thus  we  have a one-to-one correspondence between irreducible representations of $W_n$ and symbols  of rank $n$ and defect 1.

\subsection{Lusztig's Formula for $\apair{\tr(\cdot , R^{\lam}_T),\Ind^{G}_H \mathds{1}}$}

For the virtual representations $R^{\lam}_{T}$, Lusztig gave a formula in \cite[Theorem 3.3]{Lu90} to compute $\apair{\tr(\cdot,R^{\lam}_T),\Ind^{G^F}_{H^F}\triv}$.
We recall the formula in this section. Define
$$\Theta_T=\{f\in G \mid \theta(f^{-1}Tf)=f^{-1}Tf\}.$$
 Then $T$ (resp.\ $H$) acts on $\Theta_T$ by left (resp.\ right) multiplication. The double cosets $T\backslash \Theta_T /H$ are in one-to-one correspondence with the double cosets $B\bks G/H$, where $B$ is a Borel subgroup containing $T$. 
 Let $\Theta^F_T$ be the set of $F$-fixed elements. The double cosets $T^F\backslash \Theta^F_T/H^F$ are also bijective to the $G^F$-conjugacy classes of $(\theta,F)$-stable maximal tori of $G^F$.

For any $f\in \Theta^F_T$, define a morphism $\epsilon_{T,\lam}$ of $(T\cap fHf^{-1})^F$ by
\begin{equation} \label{eq:eps}
\epsilon_{T,\lam}(t)=(-1)^{\BF_q\textrm{-rank}(Z_G((T\cap fHf^{-1})^{\circ}))+\BF_q\textrm{-rank}(Z^{\circ}_G(t)\cap Z_G((T\cap fHf^{-1})^{\circ}))}.
\end{equation}
Then $\epsilon_{T,\lam}$ is a character and trivial on $((T\cap fHf^{-1})^{\circ})^F$. Define
\begin{equation*}
\Theta^F_{T,\lam}=\{f\in \Theta^F_T\mid\lam_{(T\cap fHf^{-1})^F}=\eps_{T,\lam}\}.
\end{equation*}
The groups $T^F$ and $H^F$ still act on $\Theta^F_{T,\lam}$ by left and right multiplication.
\begin{pro}[{\cite[Theorem 3.3]{Lu90}}] \label{pro:Lusztig}
\begin{equation*}
\apair{\tr(\cdot , R^{\lam}_T),\Ind^{G}_H \mathds{1}}=
\sum_{f\in T^F\bks \Theta^F_{T,\lam}/H^F}(-1)^{\BF_q\text{-rank}(T)+\BF_q\text{-rank}(Z_G((T\cap fHf^{-1})^{\circ}))}
\end{equation*}
\end{pro}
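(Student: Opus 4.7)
The plan is to translate the inner product into an equivariant Lefschetz count on the Deligne--Lusztig variety $Y_T=\{g\in G\mid g^{-1}F(g)\in U\}$, where $U$ is the unipotent radical of an $F$-stable Borel containing $T$. By Frobenius reciprocity and the realization of $R^{\lam}_T$ as the $\lam$-isotypic component of the virtual $G^F\times T^F$-module $\sum_i(-1)^iH^i_c(Y_T,\bar{\BQ}_\ell)$, one has
$$
\apair{\tr(\cdot,R^{\lam}_T),\Ind^{G^F}_{H^F}\triv}_{G^F}
=\frac{1}{|H^F|\,|T^F|}\sum_{\substack{h\in H^F\\ t\in T^F}}\lam(t)^{-1}\mathcal{L}\bigl((h,t);Y_T\bigr),
$$
where $\mathcal{L}\bigl((h,t);Y_T\bigr)$ denotes the Lefschetz number.

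Next, I would evaluate this Lefschetz number by parametrizing its fixed locus. A point $g\in Y_T$ is $(h,t)$-fixed exactly when $g^{-1}hg=t^{-1}\in T$ and $g^{-1}F(g)\in U$; writing $h=gtg^{-1}$ and using $h\in H^F$, the torus $g^{-1}Tg$ must be simultaneously $F$-stable and $\theta$-stable, so $g\in\Theta^F_T$. Grouping such $g$ into $T^F$-by-$H^F$ double cosets and applying Lang--Steinberg on each $(\theta,F)$-stable torus decomposes the sum according to $f\in T^F\bks\Theta^F_T/H^F$. For each double coset the fixed-point contribution factors through the Levi subgroup $L_f:=Z_G((T\cap fHf^{-1})^{\circ})$, since $(T\cap fHf^{-1})^{\circ}$ is the identity component of the $\theta$-fixed subtorus of $T$ transported by $f$.

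The inner sum over $t\in T^F$ on each double-coset piece reduces to averaging $\lam$ over $(T\cap fHf^{-1})^F$; by character orthogonality this average vanishes unless $\lam|_{(T\cap fHf^{-1})^F}$ agrees with the quadratic character $\epsilon_{T,\lam}$ of~\eqref{eq:eps}. This cuts the indexing set down to $\Theta^F_{T,\lam}$. The surviving Lefschetz contribution over the Deligne--Lusztig subvariety attached to $L_f$ is the Euler characteristic of an affine-space bundle over $\BF_q$, which evaluates to the canonical sign $\epsilon_G\epsilon_T=(-1)^{\BF_q\text{-rank}(G)+\BF_q\text{-rank}(T)}$ divided by the analogous sign for $L_f$. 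After the common $(-1)^{\BF_q\text{-rank}(G)}$ factor cancels against the $H^F$-normalization, one is left with $(-1)^{\BF_q\text{-rank}(T)+\BF_q\text{-rank}(L_f)}$, recovering the formula in the statement.

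The main obstacle is the sign bookkeeping. One must precisely identify the quadratic character that survives averaging with the $\epsilon_{T,\lam}$ of~\eqref{eq:eps}, and match the two $\BF_q$-rank contributions in the formula with Lang--Steinberg-normalized Euler numbers of the corresponding subvarieties. Once the parametrization of the $(h,t)$-fixed locus by $(\theta,F)$-stable tori is in hand, the remainder is a routine application of character orthogonality together with the standard rationality of Euler characteristics of affine Deligne--Lusztig pieces.
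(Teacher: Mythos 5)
You should first note that the paper does not prove this statement at all: Proposition~\ref{pro:Lusztig} is quoted verbatim from Lusztig \cite[Theorem 3.3]{Lu90}, so there is no internal argument to compare with; what you have attempted is a reproof of Lusztig's theorem, and as it stands it has a genuine gap. Your opening reduction is fine: Frobenius reciprocity plus the realization of $R^{\lam}_T$ in $H^*_c(Y_T)$ does give $\apair{\tr(\cdot,R^{\lam}_T),\Ind^{G^F}_{H^F}\triv}=\frac{1}{|H^F||T^F|}\sum_{h,t}\lam(t)^{-1}\mathcal{L}((h,t);Y_T)$. The problem is the next step, where you treat $\mathcal{L}((h,t);Y_T)$ as a count of fixed points $g$ with $g^{-1}hg=t^{-1}$. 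This silently assumes $h$ is semisimple and that the fixed locus is finite. Neither holds: for $h$ with nontrivial unipotent part the naive fixed set is empty while the Lefschetz number is a Green function value, and even for semisimple $(h,t)$ of order prime to $p$ one only gets $\mathcal{L}((h,t);Y_T)=\chi_c\bigl(Y_T^{(h,t)}\bigr)$, where the fixed locus is a positive-dimensional Deligne--Lusztig-type variety for $Z^{\circ}_G(h)$ whose Euler characteristic carries large factors such as $|Z^{\circ}_G(h)^F|_{p'}/|T^F|$. The substantive content of Lusztig's proof is precisely the evaluation of the resulting sums of Green functions over the unipotent elements of $H^F$ (and of the groups $Z^{\circ}_G(s)\cap H$ for semisimple $s$), which occupies the earlier sections of \cite{Lu90} and requires its own geometric argument; your sketch never engages with this, and it is exactly there that the $q$-power factors cancel to leave the signs $(-1)^{\BF_q\text{-rank}}$.

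Relatedly, the appearance of $\epsilon_{T,\lam}$ cannot be obtained from ``character orthogonality'' alone: plain orthogonality over $(T\cap fHf^{-1})^F$ would single out the condition $\lam|_{(T\cap fHf^{-1})^F}=\triv$, whereas $\epsilon_{T,\lam}(t)$ depends on $t$ through $\BF_q\text{-rank}(Z^{\circ}_G(t)\cap Z_G((T\cap fHf^{-1})^{\circ}))$, i.e.\ it is assembled from the rank signs produced by the Green-function evaluations attached to each semisimple element $t$, and only after that does an orthogonality argument apply to the twisted sum $\sum_t\lam(t)^{-1}\epsilon_{T,\lam}(t)$. So the two items you defer to ``sign bookkeeping'' --- the identification of the surviving quadratic character with $\epsilon_{T,\lam}$ and the conversion of Euler characteristics into pure signs --- are in fact the heart of the theorem, not routine; to complete the argument you would either have to reproduce Lusztig's unipotent-sum computation or simply cite \cite[Theorem 3.3]{Lu90} as the paper does.
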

Remark that this formula holds for all connected reductive groups  and its symmetric subgroups.

\section{Decompositions of  $W_{2n}$-module $\Xi_{n}$}\label{sec:Xi}

\subsection{Double Cosets $T\bks \Theta/H$}

Let $G$ be the symplectic group $\Sp_{4n}$.
Fix $T$ to be the set of diagonal matrices of $G$, and $B$ to be the set of upper triangular matrices of $G$. Then $B$ is an $F$-stable Borel subgroup of $G$ containing $T$.
 Let $g$ be an element of $G$ such that $g^{-1}F(g)\in N_{G}(T)$. Then $gTg^{-1}$ is also an $F$-stable maximal torus of $G$.
 As $G$ is $F$-split,
the map from $gTg^{-1}$ to the image of $g^{-1}F(g)$ in $W$ gives a bijection  between the $G^{F}$-conjugacy classes of $F$-stable maximal tori of $G$ and the conjugacy classes of $W$.
Let $g_{w}$ be an element in $G$ such $g^{-1}_{w}F(g_{w})=w$. Denote by $T_{w}=g_{w}Tg^{-1}_{w}$ the corresponding $F$-stable maximal torus.

Let $\vartheta$ be the map on $G$ defined by $\vartheta(g)=g\theta(g^{-1})$.
Then $\vartheta$ gives a bijection from $G/H$ to the image $\Im(\vartheta)$ of $\vartheta$. Note that
$$
\Im(\vartheta)\subset\{g\in G\mid \theta(g)=g^{-1}\}.
$$
Recall that $T$ is a $\theta$-stable maximal torus of $G$.
Let $N(T)$ be the normalizer subgroup of $T$ in $G$ and $\CN=N(T)\cap \Im(\vartheta)$.  Define a {\it $\theta$-twisted action} of $T$ on $\CN$ via $t* \omega= t\omega\theta(t)^{-1}$. This $\theta$-twisted action also induces an action of $T$ on $N(T)/T$, which is the $\varepsilon$-conjugation.
According to \cite[Proposition 6.8]{HW93}, $G$ is the disjoint union of the double cosets $B\gamma H$, where $T*\vartheta(\gamma)$ runs through all the obits of  the action of $T$ on $\CN$.

In our case,  the Weyl group $N(T)/T$ is  $W_{2n}$.
For $x\in N(T)$, decompose $x$ as  $x=tv\varepsilon'$ where $t\in T$ and $v\in W_{2n}$ (consider $v$ as a representative in $N(T)$) and  
$$
\varepsilon'=\begin{pmatrix} &I_{n}&&\\ \tau I_{n}&&&\\&&&\tau^{-1}I_{n}\\&&I_{n}&\end{pmatrix}\in \Sp_{4n}(\BF_{q}).
$$
Then $\theta(x)=x^{-1}$ if and only if
\begin{equation}  \label{eq:x-N}
t\cdot v tv^{-1}\cdot\diag\cpair{\tau I_{2n}, I_{2n}}v \diag\cpair{I_{2n}, \tau^{-1} I_{2n}}=v^{-1}.
\end{equation}
Moreover, $v$ is of order 2 in $W_{2n}$.

 Let $v$ be an element of $W_{2n}$ of order 2.
 Denote by
 \begin{align*}
&\ScS^{(1)}_{v}=\cpair{i\in\ScS_{2n}\mid v(i)\in \ScS_{2n}, v(i)>i},\\
&\ScS^{(2)}_{v}=\cpair{i\in \ScS_{2n}\mid v(i)=j', v(j)=i', i<j},\\
&\ScS^{(3)}_{v}=\cpair{i\in \ScS_{2n}\mid v(i)=i},\\
&\ScS^{(4)}_{v}=\cpair{i\in \ScS_{2n}\mid v(i)=i'}.
\end{align*}
Note that the sets $\ScS^{(1)}_{v}$, $v(\ScS^{(1)}_{v})$, $\ScS^{(2)}_{v}$, $v(\ScS^{(2)}_{v})'$, $\ScS^{(3)}_{v}$, and  $\ScS^{(4)}_{v}$ are a partition of $\ScS_{2n}$, where $i''=i$. Then, $v$ is of form
 $$
 \prod_{a_{i}\in \ScS^{(1)}_{v}}(a_{i},v(a_{i}))(a'_{i},v(a'_{i}))\prod_{b_{i}\in \ScS^{(2)}_{v}}(b_{i},v(b_{i}))(v(b_{i}'),b'_{i}) \prod_{c_{i}\in \ScS^{(4)}_{v}}(c_{i},c'_{i}).
 $$
Denote by $a_{v}$, $b_{v}$ and $c_{v}$ the corresponding product over $\ScS^{(i)}$ for $i=1,2,4$, respectively. Note that each factor  commutes with the others.
We choose the following representatives in $G^{F}$  corresponding to the cycles $(i,j)(i',j')$ and $(i,j)(i',j')(i,i')(j,j')$ respectively by embedding the $4\times 4$ matrices
$$
\begin{pmatrix}
&1&&\\1&&&\\&&&1\\&&1&
\end{pmatrix}
\text{ and }
\begin{pmatrix}
&&1&\\&&&-1\\1&&&\\&-1&&
\end{pmatrix},
$$
 into the $4n\times 4n$ matrices in the $(i,j,j',i')$-th rows and $(i,j,j',i')$-th columns, with 1 for the remaining diagonal entries, and zeros for all the remaining non-diagonal entries.

Denote by $\Omega$ the set of all pairs $(t_{v},v)$ in $T\times W$ satisfying the following conditions:
\begin{itemize}
\item $\ScS^{(4)}_{v}=\emptyset$;
\item $(t_{v})_{i}=\begin{cases}
\tau^{-1} & i\in v(\ScS^{(1)}_{v})\\
\pm 1/\sqrt{\tau} & i\in \ScS^{(3)}_{v}\\
1 & i\in \ScS_{2n}\smallsetminus (v(\ScS^{(1)}_{v})\cup \ScS^{(3)}_{v});
\end{cases}$
\item $m_{v}=\frac{1}{2}\ScS^{(3)}_{v}$, where
\begin{equation}
m_{v}=\#\{i\in \ScS^{(3)}_{v}\mid (t_{v})_{i}=1/\sqrt{\tau}\}.
\end{equation}
\end{itemize}

Let $\gamma(t_{v},v)$ be an element in $G^{F}$ such that $\vartheta(\gamma(t_{v},v))=t_{v}v\varepsilon'$.
Indeed, there exists an element $g_{1}$ in $\GL_{4n}$ such that $g_{1}\varepsilon g_{1}^{-1}=t_{v}v\varepsilon'\varepsilon$, since both $\varepsilon$ and $t_{v}v\varepsilon'\varepsilon$ are $\GL_{4n}$-conjugate to $\diag\{\sqrt{\tau}I_{2n},-\sqrt{\tau}I_{2n}\}$.
Since $\varepsilon\in \GSp_{4n}$, there exists an element $\gamma(t_{v},v)$ in $\Sp_{4n}$ such that $\gamma(t_{v},v)\varepsilon \gamma(t_{v},v)^{-1}=t_{v}v\varepsilon'\varepsilon$ and  then $\vartheta(\gamma(t_{v},v))=t_{v}v\varepsilon'$. In addition, $\{t_{v}v\varepsilon'\mid (t_{v},v)\in \Omega\}$ is a subset of $\CN$.

\begin{lm}
The set of double cosets $B\bks G/H$ is in one-to-one correspondence with the set $\Omega$. Moreover, $\{\gamma(t_{v},v)\mid (t_{v},v)\in \Omega\}$ is a set of double coset representatives.
\end{lm}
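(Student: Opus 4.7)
The plan is to apply \cite[Proposition~6.8]{HW93} as recalled above: the double cosets $B\bks G/H$ are in bijection with the $T$-orbits on $\CN$ under the $\theta$-twisted action $t_{0}*\omega = t_{0}\omega\theta(t_{0})^{-1}$. My task is thus to enumerate these orbits and, for each, to exhibit a representative of the form $t_{v}v\varepsilon'$ with $(t_{v},v)\in\Omega$ together with an $F$-rational $\vartheta$-preimage in $G^{F}$.

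First I parametrize $\CN$. Any $x\in N(T)$ decomposes as $x=tv\varepsilon'$ with $t\in T$ and $v\in W_{2n}$ (using the fixed lifting of $W_{2n}$ into $N(T)$ via the $4\times 4$-block representatives described above). The membership $x\in\Im(\vartheta)$ is equivalent to $\theta(x)=x^{-1}$, which after multiplying through is precisely the identity \eqref{eq:x-N}. Separating the permutation and diagonal contributions on the two sides of \eqref{eq:x-N}, I read off that $v^{2}=1$, that $\ScS^{(4)}_{v}=\emptyset$ (cycles of type $(c,c')$ being incompatible with the right-hand side), and the coordinate-wise torus constraints $(t)_{i}(t)_{v(i)}=\tau^{-1}$ for $i\in\ScS^{(1)}_{v}$, $(t)_{i}(t)_{v(i)}=1$ for $i\in\ScS^{(2)}_{v}\cup v(\ScS^{(2)}_{v})'$, and $(t)_{i}^{2}=\tau^{-1}$ for $i\in\ScS^{(3)}_{v}$.

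Next, since $\varepsilon$ and $\varepsilon'$ implement the same block-swap involution on $T$, one checks that $\varepsilon'\theta(t_{0})^{-1}(\varepsilon')^{-1}=t_{0}^{-1}$ for every $t_{0}\in T$, so the twisted $T$-action takes the form
$$
t_{0} * (tv\varepsilon') = \bigl(t_{0}\cdot t\cdot vt_{0}^{-1}v^{-1}\bigr)\,v\varepsilon',
$$
acting on the $i$-th coordinate of $t$ by multiplication by $(t_{0})_{i}(t_{0})_{v(i)}^{-1}$. On any two-cycle of $v$ the product $(t)_{i}(t)_{v(i)}$ is an orbit invariant while the ratio is completely free, so I normalize $(t)_{i}=1$ for $i\in\ScS^{(1)}_{v}\cup\ScS^{(2)}_{v}\cup v(\ScS^{(2)}_{v})'$, and the paired coordinate is thereby forced to the value prescribed in the definition of $\Omega$. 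On the fixed-point indices $\ScS^{(3)}_{v}$ the twisted $T$-action is trivial, so each choice of sign $(t_{v})_{i}=\pm 1/\sqrt{\tau}$ is an orbit invariant over $\bar{\BF}_{q}$.

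The balance condition $m_{v}=\tfrac{1}{2}\#\ScS^{(3)}_{v}$ will then be extracted from the $F$-rationality requirement on $\gamma(t_{v},v)$. Since $F(\sqrt{\tau})=-\sqrt{\tau}$, the Frobenius exchanges the two possible sign values on each $\ScS^{(3)}_{v}$-coordinate; by tracking this together with the residual conjugation by elements of $H$, which (as the centraliser of $\varepsilon$) interchanges the $\pm\sqrt{\tau}$-eigenspaces of $\varepsilon$ under $F$, one singles out precisely the balanced sign-patterns as those giving rise to $F$-stable double cosets admitting $\BF_{q}$-rational representatives. Finally, for any $(t_{v},v)\in\Omega$, a direct block computation yields $(t_{v}v\varepsilon'\varepsilon)^{2}=\tau I_{4n}=\varepsilon^{2}$, so both $\varepsilon$ and $t_{v}v\varepsilon'\varepsilon$ are semisimple with eigenvalues $\pm\sqrt{\tau}$ of multiplicity $2n$ each, hence $\GL_{4n}$-conjugate; since both lie in $\GSp_{4n}$ with common multiplier $\tau$, the conjugator may be chosen inside $\Sp_{4n}$, and Lang's theorem applied to the connected symmetric subgroup $H$ refines the choice to a $\gamma(t_{v},v)\in G^{F}$.

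The main obstacle I expect is the balance step: extracting the identity $m_{v}=\tfrac{1}{2}\#\ScS^{(3)}_{v}$ requires a careful bookkeeping of the combined Frobenius action $F(\sqrt{\tau})=-\sqrt{\tau}$ with the residual equivalences on the sign-patterns on $\ScS^{(3)}_{v}$. The remainder is a notationally heavy but essentially routine expansion of the block-matrix identity \eqref{eq:x-N}.
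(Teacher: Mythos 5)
There is a genuine gap, and it sits exactly at the point you flag as the "main obstacle": the balance condition $m_{v}=\tfrac12\#\ScS^{(3)}_{v}$. You assert that membership of $x=tv\varepsilon'$ in $\Im(\vartheta)$ is \emph{equivalent} to $\theta(x)=x^{-1}$, but this is only a necessary condition (the paper records only the inclusion $\Im(\vartheta)\subset\{g\mid\theta(g)=g^{-1}\}$, and it is strict here). The double cosets $B\bks G/H$ correspond, via \cite[Proposition 6.8]{HW93}, to $T$-orbits on $\CN=N(T)\cap\Im(\vartheta)$, not on the larger set cut out by $\theta(x)=x^{-1}$. Since, as you correctly observe, the twisted $T$-action is trivial on the $\ScS^{(3)}_{v}$-coordinates, every sign pattern would give a distinct $T$-orbit in the larger set; if the unbalanced patterns actually lay in $\CN$, the asserted bijection with $\Omega$ would simply be false. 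So the balance condition must be established already over $\bar{\BF}_{q}$, and it cannot be "extracted from the $F$-rationality requirement": the lemma is a statement about the algebraic group $G$, where Frobenius imposes no constraint on $B\bks G/H$. (Your proposed mechanism is also internally shaky: $F$ flips \emph{all} signs simultaneously, which does not distinguish balanced from unbalanced patterns, and $H=Z_{G}(\varepsilon)$ preserves each $\pm\sqrt{\tau}$-eigenspace rather than interchanging them; the interaction of $F$ with the sign patterns is the content of the \emph{next} lemma, computing $\Omega^{F}_{w}=\Omega^{w}_{\pm}$, not of this one.)

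The missing idea is the paper's eigenspace count: $x\in\Im(\vartheta)$ means $g\varepsilon g^{-1}=x\varepsilon$ for some $g\in G$, so $x\varepsilon=t v\varepsilon'\varepsilon$ must have the same eigenvalue multiplicities as $\varepsilon$, namely $\pm\sqrt{\tau}$ each with multiplicity $2n$. Decomposing along the $\ScS^{(1)}_{v}$-, $\ScS^{(2)}_{v}$- and $\ScS^{(3)}_{v}$-blocks, the first two contribute equally to both eigenspaces, while the $\ScS^{(3)}_{v}$-block contributes $2m^{+}$ and $2m^{-}$; equality with $2n$ forces $m^{+}=m^{-}=\tfrac12\#\ScS^{(3)}_{v}$. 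Conversely, for balanced patterns the matching multiplicities give $\GL_{4n}$-conjugacy of $t_{v}v\varepsilon'\varepsilon$ with $\varepsilon$, refined to conjugacy by an element of $\Sp_{4n}$, which produces the representatives $\gamma(t_{v},v)$. The rest of your orbit analysis ($\ScS^{(4)}_{v}=\emptyset$, the coordinate constraints from \eqref{eq:x-N}, normalizing one coordinate on each two-cycle, invariance of the signs on $\ScS^{(3)}_{v}$) agrees with the paper, but without the eigenspace argument you have not shown that the unbalanced patterns contribute no double cosets, so the stated bijection with $\Omega$ is not proved.
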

\begin{proof}
Let $x=tv\varepsilon'$. Since the $(i,i)$-th entry of $\theta(x)x$ is $-1$ for $i\in \ScS^{(4)}_{w}$, $\theta(x)\neq x^{-1}$ and $x$ is not in $\CN$ if $\ScS^{(4)}\neq \emptyset$.
Thus  for $x\in \CN$  we may assume $x=ta_{v}b_{v}\varepsilon'$ for some $t=\diag\cpair{t_{1},t_{2},\dots,t_{2n},t^{-1}_{2n},\dots,t^{-1}_{2},t^{-1}_{1}}\in T$.
Let $t^{(i)}$ for $1\leq i\leq 3$ be elements in $T$, and $t^{(i)}_{j}$ be the $(j,j)$-th entry given by
$$
t^{(1)}_{j}=\begin{cases}
t_{j} & j\in \ScS^{(1)}_{v}\cup v(\ScS^{(1)}_{v})\\
1 & j\in\ScS_{2n}\smallsetminus (\ScS^{(1)}_{v}\cup v(\ScS^{(1)}_{v})),
\end{cases}
t^{(3)}_{j}=\begin{cases}
t_{j} & j\in \ScS^{(3)}_{v}\\
1 & j\in\ScS_{2n}\smallsetminus \ScS^{(3)}_{v},
\end{cases}
$$
and
$$
t^{(2)}_{j}=\begin{cases}
t_{j} & j\in \ScS^{(2)}_{v}\cup v(\ScS^{(2)}_{v})'\\
1 & j\in\ScS_{2n}\smallsetminus  (\ScS^{(2)}_{v}\cup v(\ScS^{(2)}_{v})').
\end{cases}
$$
Then  $t=t^{(1)}t^{(2)}t^{(3)}$. Let $d_{\tau}=\diag\{\tau I_{2n}, \tau^{-1}I_{2n}\}$. We also have the decomposition $d^{(1)}_{\tau}d^{(2)}_{\tau}d^{(3)}_{\tau}$ corresponding to $t^{(i)}$. Further,
\begin{align*}
&tvtv^{-1}=ta_{v}ta^{-1}_{v}b_{v}tb^{-1}_{v}=a_{v}t^{(1)}a^{-1}_{v}b_{v}t^{(2)}b^{-1}_{v}(t^{(3)})^{2},
\end{align*}
and
\begin{align*}
&\diag\cpair{\tau I_{2n},I_{2n}}w\diag\cpair{I_{2n},\tau^{-1} I_{2n}}w
=d^{(1)}_{\tau}a^{2}_{w}b^{2}_{w}d^{(3)}_{\tau}=d^{(1)}_{\tau}d^{(3)}_{\tau}.
\end{align*}

By $\theta(x)=x^{-1}$ and Equation~\eqref{eq:x-N}, we have
\begin{align*}
I_{2n}=&tvtv^{-1}\diag\cpair{\tau I_{2n},I_{2n}}v\diag\cpair{I_{2n},\tau^{-1} I_{2n}}v\\
=&d^{(1)}_{\tau}t^{(1)}a_{v}t^{(1)}a^{-1}_{v}\cdot t^{(2)}b_{v}t^{(2)}b^{-1}_{v}\cdot(t^{(3)})^{2}d^{(3)}_{\tau}.
\end{align*}
Thus
\begin{equation}\label{eq:t}
t_{i}=\begin{cases}
(\tau t_{v(i)})^{-1} & i\in \ScS^{(1)}_{v}\\
t_{v(i)'} &i\in \ScS^{(2)}_{v}\\
\pm1/\sqrt{\tau} & i\in \ScS^{(3)}_{v}.
\end{cases}
\end{equation}

Denote by $m^{+}_{v}=\#\{i\in \ScS^{(3)}_{v}\mid t_{i}=1/\sqrt{\tau}\}$ and $m^{-}_{v}=\#\{i\in \ScS^{(3)}_{v}\mid t_{i}=-1/\sqrt{\tau}\}$.
Since $x\in \CN$, there exists $g\in G$ such that $g\theta(g)^{-1}=x$ and then $g\varepsilon g^{-1}=tv\varepsilon'\varepsilon$.  The eigenvalues of $\varepsilon$ are $\pm \sqrt{\tau}$ and the dimension of the eigenspaces is $2n$. We have
$$
tv\varepsilon'\varepsilon=t^{(1)}a_{v}t^{(2)}b_{v}t^{(3)}\diag\cpair{\tau I_{2n}, I_{2n}},
$$
and decompose $\diag\cpair{\tau I_{2n}, I_{2n}}$ as $(\varepsilon'\varepsilon)^{(1)}(\varepsilon'\varepsilon)^{(2)}(\varepsilon'\varepsilon)^{(3)}$ similarly to the decomposition $t=t^{(1)}t^{(2)}t^{(3)}$.

By Equation~\eqref{eq:t},
the dimensions of the $\pm\sqrt{\tau}$-eigenspaces corresponding to the matrix $t^{(1)}a_{v}(\varepsilon'\varepsilon)^{(1)}$ are $2\#\ScS^{(1)}_{v}$, the dimensions of the $\pm\sqrt{\tau}$-eigenspaces corresponding to the matrix $t^{(2)}b_{v}(\varepsilon'\varepsilon)^{(2)}$ are $2\#\ScS^{(2)}_{v}$, and the dimensions of the $\pm\sqrt{\tau}$-eigenspaces for the matrix $t^{(3)}(\varepsilon'\varepsilon)^{(3)}$ are $2m^{+}$ and $2m^{-}$. Thus $m^{+}=m^{-}=\frac{1}{2}\ScS^{(3)}_{v}$.

Let $h\in T$. The $\theta$-twisted action $h*x$ is given by
\begin{equation}\label{eq:h}
hx\theta(h)^{-1}=h^{(1)}t^{(1)}(a_{v}(h^{(1)})^{-1}a^{-1}_{v})(h^{(2)})^{-1}t^{(2)}(c_{v}h^{(2)}c^{-1}_{v})v\varepsilon'.
\end{equation}
Choose the element $h\in T$ defined by
$$
h_{i}=\begin{cases}
t^{-1}_{i} &i\in \ScS^{(1)}_{v}\cup \ScS^{(2)}_{v}\\
1 & i\in \ScS_{2n}\smallsetminus (\ScS^{(1)}_{v}\cup \ScS^{(2)}_{v}).
\end{cases}
$$
By Equation~\eqref{eq:t} and \eqref{eq:h}, $h*x=t_{v}v\varepsilon'$.  Each $T$-orbit on $\CN$ has a nontrivial intersection with $\{t_{v}v\varepsilon'\mid (t_{v},v)\in \Omega\}$. In addition, by the above discussion, for the different pairs $(t_{v_{1}},v_{1})$ and $(t_{v_{2}},v_{2})$, the elements $t_{v_{1}}v_{1}\varepsilon'$ and $t_{v_{2}}v_{2}\varepsilon'$ are in different $T$-orbits.
This completes the proof of the lemma.
\end{proof}

\begin{cor}
The set $\cpair{\gamma(t_{v},v) \mid (t_{v},v)\in \Omega}$ is a set of coset representatives for $T\bks \Theta /H$.
Furthermore, $\Omega_{w}:=\cpair{g_{w}\gamma(t_{v},v) \mid (t_{v},v)\in \Omega}$ is a set of coset representatives for $T_{w}\bks \Theta_{T_{w}}/H$.
\end{cor}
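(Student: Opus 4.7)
The plan is to deduce the corollary in two stages, both of which are essentially reorganizations of the preceding lemma combined with standard bookkeeping, so no genuinely new computation is required.

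For the first assertion, I would begin by checking that every $\gamma(t_v,v)$ actually lies in $\Theta_T$. By construction $\vartheta(\gamma(t_v,v)) = t_v v \varepsilon'$ lies in $N(T)$, and from $\gamma\theta(\gamma^{-1}) = n \in N(T)$ one reads off $\theta(\gamma) = n^{-1}\gamma$ and $\theta(\gamma^{-1}) = \gamma^{-1}n$; hence
\[
\theta(\gamma^{-1}T\gamma) = \theta(\gamma^{-1})\,T\,\theta(\gamma) = \gamma^{-1}(nTn^{-1})\gamma = \gamma^{-1}T\gamma,
\]
so $\gamma(t_v,v) \in \Theta_T$. Next I would invoke the bijection $T\bks\Theta_T/H \leftrightarrow B\bks G/H$ recalled earlier in the paper (which sends a double coset $T\gamma H$ to $B\gamma H$, the inverse being induced by $\gamma \mapsto \vartheta(\gamma)$ and the bijection $T\bks\CN \leftrightarrow B\bks G/H$ of Helminck–Wang). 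The preceding lemma shows that $\{\gamma(t_v,v) : (t_v,v) \in \Omega\}$ is a system of representatives for $B\bks G/H$; pulling this back through the bijection gives the corresponding system for $T\bks\Theta_T/H$.

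For the second assertion, I would verify that left multiplication by $g_w$ induces a bijection $\Theta_T \to \Theta_{T_w}$ which intertwines the left $T$-action with the left $T_w$-action. Indeed, if $f \in \Theta_T$ and $f' = g_w f$, then
\[
(f')^{-1}T_w f' = f^{-1}g_w^{-1}(g_w T g_w^{-1})g_w f = f^{-1}T f,
\]
so $\theta$-stability of $f^{-1}Tf$ is the same as $\theta$-stability of $(f')^{-1}T_w f'$; this is clearly a bijection with inverse left multiplication by $g_w^{-1}$. Moreover $T_w(g_w f) = g_w(Tf)$ and $(g_w f)H = g_w(fH)$, so the bijection descends to $T\bks\Theta_T/H \cong T_w\bks\Theta_{T_w}/H$. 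Applying this to the representatives just obtained yields $\Omega_w$ as claimed.

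There is no real obstacle here; the only minor subtlety is confirming that $\vartheta(\gamma(t_v,v)) \in N(T)$ suffices to place $\gamma(t_v,v)$ in $\Theta_T$, which is the short computation carried out above. Everything else is a transport-of-structure argument through the bijections already set up in the paper.
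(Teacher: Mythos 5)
Your argument is correct and takes essentially the same route the paper intends: the preceding lemma already gives representatives for $B\backslash G/H$, and the corollary follows by transporting them through the bijection $T\backslash \Theta_T/H \leftrightarrow B\backslash G/H$ (induced by $\vartheta$ and the Helminck--Wang parametrization of $T$-orbits on $\CN$) and then translating by $g_w$. Your checks that $\gamma(t_v,v)\in\Theta_T$ (using $\theta(T)=T$ and $\vartheta(\gamma(t_v,v))\in N(T)$) and that left multiplication by $g_w$ intertwines the $T$- and $T_w$-actions simply make explicit the routine details the paper leaves unstated.
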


Since $T_{w}$ and $H$ are connected, and $\theta$ and $F$ commute, the action of $F$ on $G$ induces an action on  the double cosets $T_{w}\bks \Theta_{T_{w}}/H$.
In addition, since the group $T_{w}\cap \gamma H \gamma^{-1}$ are connected for all $\gamma$ in $T_{w}\bks \Theta_{T_{w}}/H$, the action of $F$ on $T_{w}\bks \Theta_{T_{w}}/H$ is a permutation on the representatives $\Omega_{w}$.
Let $\Omega^{F}_{w}$ be the subset consisting of all fixed orbits under the action of $F$. Then 
 \begin{equation}
 T^{F}_{w}\bks \Theta^{F}_{T_{w}}/H^{F}\leftrightarrow \Omega^{F}_{w}.
 \end{equation}

%
%\begin{lm} \label{lm:fro}
%An element $F(g_{w}\gamma(t_{v},v))$ is  in the double coset $T_{w}\cdot g_{w}\gamma(t_{v'},v')\cdot H$ if and only if
%$w\vartheta(F(\gamma(t_{v},v)))\theta(w)^{-1}=t\vartheta(\gamma(t_{v'},v'))\theta(t)^{-1}$ for some $t$ in $T$.
%%Moreover, the double coset $gTg^{-1}\gamma H$ is $F$-stable if and only if $w_{\gamma}\in C_{W}(w_{0})$.
%\end{lm}
%\begin{proof}
%$F(g_{w}\gamma(t_{v},v))$ is  in the double coset $T_{w}\cdot g_{w}\gamma(t_{v'},v')\cdot H$ is equivalent to
%\begin{align*}
%&F(g_{w}\vartheta(\gamma(t_{v},v))\theta(g^{-1}_{w}))=g_{w}t\vartheta(\gamma(t_{v'},v'))\theta(t^{-1})\theta(g_{w}^{-1})\\
%\iff&wF(t_{v})v\varepsilon'\theta(w^{-1})=t t_{v'}v'\varepsilon'\theta(t^{-1}).
%\end{align*}
%\end{proof}

%Fix $w=g^{-1}F(g)$. If $\CO_{x}$ is $F$-stable, then $w_{0}F(x)\theta(w_{0})^{-1}=tx\theta(t)^{-1}$ for some $t\in T$.
Define
$$
\Omega^{w}_{\pm}=\cpair{(t_{v},v)\in \Omega\mid v\in Z_{W}(w) \text{ and } w(t_{v})_{i}w^{-1}=-(t_{v})_{i} \text{ for  } i\in \ScS^{(3)}_{v}}.
$$
\begin{lm}
If $T_{w}$ is an $F$-stable maximal torus corresponding to $w$, then $\Omega^{F}_{w}=\Omega^{w}_{\pm}$, i.e. there is a bijection
$$
T^{F}_{w}\bks \Theta^{F}_{T_{w}}/ H^{F}\leftrightarrow \Omega^{w}_{\pm}.
$$
\end{lm}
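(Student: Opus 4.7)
The approach is to translate $F$-fixedness of the double coset of $\gamma=g_w\gamma(t_v,v)$ into an explicit condition on the parameter $(t_v,v)\in\Omega$. Since $T_w$, $H$, and their $\theta$-twisted intersections are all connected, Lang--Steinberg identifies $T_w^F\bks\Theta_{T_w}^F/H^F$ with the set of $F$-fixed double cosets $(T_w\bks\Theta_{T_w}/H)^F$, so it suffices to decide when the double coset of $g_w\gamma(t_v,v)$ is $F$-stable.

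I would then transport the problem along $\gamma\mapsto g_w^{-1}\gamma\mapsto\vartheta(g_w^{-1}\gamma)\in\CN$. Using $F(g_w)=g_w w$ and the commutation of $F$ with $\vartheta$, the $F$-action on $T_w\bks\Theta_{T_w}/H$ corresponds under this bijection to the twisted Frobenius $F_w\colon y\mapsto wF(y)\theta(w)^{-1}$ on $\CN$, modulo the $T$-twisted action $t*x=tx\theta(t)^{-1}$. Thus the orbit of $(t_v,v)$ is $F$-fixed if and only if $F_w(t_vv\varepsilon')$ lies in $T*(t_vv\varepsilon')$.

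Explicit computation using $\varepsilon^{2}=\tau I$, $\theta(w)=\varepsilon w\varepsilon^{-1}$, and $\varepsilon'\varepsilon=d_\tau:=\diag\{\tau I_{2n},I_{2n}\}$ gives
$$
F_w(t_vv\varepsilon')=(wF(t_v)w^{-1})\cdot(wvw^{-1})\cdot\bigl((wd_\tau w^{-1})d_\tau^{-1}\bigr)\cdot\varepsilon',
$$
in which the third factor is a diagonal matrix in $T$. Since the Weyl-group part of a $T$-orbit representative in $\CN$ is determined by the orbit, matching this with $t_vv\varepsilon'$ forces $wvw^{-1}=v$, i.e.\ $v\in Z_W(w)$, which is the first condition defining $\Omega^w_\pm$.

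For the torus part, the key observation is that the entries of $t_v$ lie in $\{1,\tau^{-1},\pm 1/\sqrt{\tau}\}$, and since $F(\sqrt{\tau})=-\sqrt{\tau}$, Frobenius fixes the first two types of entries but negates the last. Hence $F(t_v)$ differs from $t_v$ precisely by a sign flip on $\ScS^{(3)}_v$. After conjugation by $w$ and absorption of the diagonal correction $(wd_\tau w^{-1})d_\tau^{-1}$ into the $T$-twisted orbit, the remaining obstruction reduces exactly to $w(t_v)_iw^{-1}=-(t_v)_i$ for $i\in\ScS^{(3)}_v$. The main obstacle will be the careful bookkeeping here: one must verify that the image of $s\mapsto s\cdot vs^{-1}v^{-1}$, combined with the diagonal correction $(wd_\tau w^{-1})d_\tau^{-1}$, is flexible enough to absorb any discrepancy on $v(\ScS^{(1)}_v)$, $\ScS^{(2)}_v$, and $v(\ScS^{(2)}_v)'$, so that $F$-fixedness genuinely localizes to the sign-flip condition on $\ScS^{(3)}_v$ and no further constraints are imposed.
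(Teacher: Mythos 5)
Your proposal follows essentially the same route as the paper's proof: reduce to $F$-stability of double cosets via connectedness, transport along $\vartheta$ to $\CN$ so that $F$ becomes the twisted Frobenius $y\mapsto wF(y)\theta(w)^{-1}$ modulo the $T$-twisted action, read off $v\in Z_{W}(w)$ from the Weyl component, and reduce the torus component to the sign condition on $\ScS^{(3)}_{v}$. The one step you flag but do not carry out --- that no further constraints arise away from $\ScS^{(3)}_{v}$ --- is precisely where the paper's argument does its (short) work: setting $g=t_{v}\cdot v(\varepsilon'\varepsilon)v^{-1}$, one checks $vgv^{-1}g=\tau I_{4n}$, and together with $v\in Z_{W}(w)$ and $F(\tau)=\tau$ this forces $wF(g)w^{-1}g^{-1}$ to lie in the $v$-split torus $\{t\in T\mid vtv^{-1}=t^{-1}\}$ automatically, so membership in its identity component is obstructed only at the $v$-fixed coordinates, where the entry is $\pm 1$ and equals $+1$ exactly when $w(t_{v})_{i}w^{-1}=-(t_{v})_{i}$ for $i\in\ScS^{(3)}_{v}$. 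So your outline is correct and completes along the lines you indicate; the identity $vgv^{-1}g=\tau I_{4n}$ is the bookkeeping device that makes the absorption on $v(\ScS^{(1)}_{v})$, $\ScS^{(2)}_{v}$ and $v(\ScS^{(2)}_{v})'$ automatic rather than something to be checked coordinate by coordinate.
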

\begin{proof}
$F(g_{w}\gamma(t_{v},v))$ is  in the double coset $T_{w}\cdot g_{w}\gamma(t_{v'},v')\cdot H$ if and only if for some $t\in T$
\begin{align*}
&wF(t_{v})v\varepsilon'\varepsilon w^{-1}=t t_{v}v\varepsilon'\varepsilon t^{-1}\\
%\iff &wF(t_{v})w^{-1}\cdot wv\diag\cpair{\tau I_{2n},I_{2n}}w^{-1}=tt_{v}v\diag\cpair{\tau I_{2n},I_{2n}} t^{-1}\\
\iff &wF(t_{v})\diag\cpair{\tau I_{2n},I_{2n}}^{v}w^{-1} \cdot wvw^{-1}=t_{v}\diag\cpair{\tau I_{2n},I_{2n}}^{v}\cdot tvt^{-1}
\end{align*}
where $\diag\cpair{\tau I_{2n},I_{2n}}^{v}=v\cdot \diag\cpair{\tau I_{2n},I_{2n}}\cdot v^{-1}$.
Then $v$ commutes with $w$ in $W$.
Since $v^{2}=I_{4n}$ and $v\in Z_{W}(w)$, $wvw^{-1}$ is $T^{F}$-conjugate to $v$ and $tvt^{-1}\cdot wv^{-1}w^{-1}=t_{0}$ for some $t_{0}$ in the connected component of the $v$-split torus $\{t\in T\mid vtv^{-1}=t^{-1}\}$. The statement is equivalent to 
\begin{align*}
wF(t_{v})\diag\cpair{\tau I_{2n},I_{2n}}^{v}w^{-1}  =t_{v}\diag\cpair{\tau I_{2n},I_{2n}}^{v}\cdot t_{0}
\end{align*}
for some $t_{0}$ in the connected component of $v$-split torus.
Let $g=t_{v}\diag\cpair{\tau I_{2n},I_{2n}}^{v}$.
It is enough to prove that $wF(g)w^{-1}g^{-1}$ is in the connected component of $v$-split torus. 
Since $vgv^{-1}g=\tau I_{4n}$, the existence of $t_{0}$ is equivalent to $w(t_{v})_{i}w^{-1}=-(t_{v})_{i}$ for each $i\in \ScS^{(3)}_{v}$ under the assumption $v\in Z_{W}(w)$.  Then this lemma follows.
\end{proof}

According to \cite[Theorem 3.5.6]{Car93}, for a connected reductive group $G$, if its derived group is simply connected,  then $Z_{G}(s)$ is connected for any semi-simple element $s$. Since $G^{F}$ is the symplectic group $\Sp_{4n}(\BF_{q})$, $Z_{G}(t)$ and $T\cap fHf^{-1}$ in the definition \eqref{eq:eps} are connected. Thus $\epsilon_{T,f}$ is trivial.
If the character $\lam$ is trivial, then $T^F\bks \Theta^F_{T,\lam}/H^F$ is the same as $T^F\bks \Theta^F_{T}/H^F$.

\begin{ex}
Let $G$ be $\Sp_{4}$. By the lemmas above, it is easy to check that $\Omega$ contains 4 elements. 
If $(t_{v},v)$ is in $\Omega$, then $v$ is the identity $e$ or $(1,2)(1',2')$ or $(1,2')(1',2)$.
 If $v=e$, there are two choices of $t_{v}$, which are $\pm\diag\{1/\sqrt{\tau},-1/\sqrt{\tau},-\sqrt{\tau},\sqrt{\tau}\}$. 
 If $v\ne e$, $t_{v}$ is trivial. Hence we simply use $\{(1,2),(1,2'),(+,-),(-,+)\}$ to parametrize these 4 double cosets.
We give the centralizer  $Z_G(T_{w}\cap fH{f}^{-1})$ for each $F$-stable torus $T_{w}$ and the associated double cosets.

\begin{table}[h]
\begin{tabular}{|>{$}c<{$}| >{$}c<{$}| >{$}c<{$}| >{$}c<{$}| >{$}c<{$}| >{$}c<{$}|}
\hline
&(1^{2};0)&(2;0)&(1;1)&(0;2)&(0;1^{2})\\
\hline
(T_{w})^{F}&\BF^{\times}_{q}\times\BF^{\times}_{q}&
\BF^{\times}_{q^{2}}&\BF^{\times}_{q}\times\BF^{1}_{q^{2}}&\BF^{1}_{q^{4}}&\BF^{1}_{q^{2}}\times\BF^{1}_{q^{2}}\\
(1,2)&
\GL_{2}(\BF_{q})&\GL_{2}(\BF_{q})
&-&-&\RU_{2}(J,\BF_{q^{2}})\\
(+,-)&
-&\BF^{\times}_{q^{2}}&-&\BF^{1}_{q^{4}}&-\\
(-,+)&
-&\BF^{\times}_{q^{2}}&-&\BF^{1}_{q^{4}}&-\\
(1,2')&
\GL_{2}(\BF_{q})&\RU_{2}(\BF_{q^{2}})
&-&-&\RU_{2}(J,\BF_{q^{2}})\\
R^{\triv}_{w}&2&2&0&2&-2\\
\hline
\end{tabular}
\end{table}
 The first row lists the pairs of partitions in $\CP_{2}$ parametrizing  the $G^{F}$-conjugacy classes of $F$-stable maximal tori.
The last row is the dimension formula $<R^{\triv}_{w},\Ind^{G^{F}}_{H^{F}}\triv>$.
The symbol `$-$' means that the corresponding double coset is not $F$-stable and not in the set $\Omega^{w}_{\pm}$.
\end{ex}

\subsection{Decomposition of $\Xi_{n}$}
There is a one-to-one correspondence between pairs of partitions $(\alpha;\beta)$ such that $|\alpha|+|\beta|=n$ and the conjugacy classes in $W_{n}$.
If $i$ and $j$ are parts of $\alpha$ and $\beta$ respectively, we map $i$ and $j$ to Coxter elements in $S_{i}$ and $W_{j}$ respectively, which extends to a bijection between pairs of partitions $(\alpha;\beta)$ in $\CP_{n}$ and the conjugacy classes in $W_{n}$.
Denote by $Cl_{W_{|\alpha|+|\beta|}}(\alpha;\beta)$ the conjugacy class corresponding to $(\alpha; \beta)$.
If $w$ is in $W_{2n}$ of cycle-type $(\alpha;\beta)$, then
$$
T^{F}_{w}=\prod_{i}\BF_{q^{i}}^{\times m_{i}(\alpha)}\times \prod_{i} (\BF_{q^{2i}}^{1})^{m_{i}(\beta)}
\text{ and }
\text{$\BF_{q}$-rank}(T_{w})=\ell(\alpha),
$$
where $\BF^{1}_{q^{2i}}$ is the unitary group $\RU_{1}(\BF_{q^{2i}})$.

Let $(\alpha;\beta)$ be in $\CP_{2n}$.
Let $I_{1}(\alpha)=\{(1\ 1'),(2\ 2'),\dots,(\ell(\alpha)\ \ell(\alpha)'))\}$ where $(i\ i')$ is a permutation in $W_{2n}$,
and  $I_{2}(\beta)=\{1,2,\dots,\ell(\beta)\}$  be sets indexing the parts of $\alpha$ and $\beta$ respectively. Define
$$\RP_{1}(\alpha)=\{((i\ i'),j)\mid (i\ i')\in I_{1}(\alpha)\text{ and } j\in \BZ_{\alpha_{i}} \}$$
%$$\RP_{1}(\alpha)=\{(i,j)\mid i\in I_{1}(\alpha)\text{ and } j\in \{1,\dots,\alpha_{i},\alpha'_{i},\dots,1'\}, j\in \BZ_{\alpha_{i}}\times \BZ_{\alpha_{i}}\}$$
and
$$
\RP_{2}(\beta)=\{(i,j)\mid i\in I_{2}(\beta)\text{ and }j\in\BZ_{2\beta_{i}}\}.
$$
%$$
%\RP_{2}(\beta)=\{(i;j)\mid i\in I_{2}(\beta)\text{ and }j\in\{1,\dots,\beta_{i},\beta'_{i},\dots,1'\}\}.
%$$
Then $w$ is a permutation of $\RP_{1}(\alpha)$ and $\RP_{2}(\beta)$ by $w(i,j)=(i,j+1)$.
% and
%$$
%w\colon (i,1)\to (i,2)\to\cdots\to (i,\beta_{i})\to (i,(\beta_{i}-1)')\to\cdots \to (i,1') \to (i,\beta'_{i}).
%$$

For each $(t_{v},v)\in \Omega^{w}_{\pm}$, $v$ induces permutations of $\RP_{1}(\alpha)$ and $\RP_{2}(\beta)$ of order 2, denoted by $\bar{v}$ and $\tilde{v}$ respectively. There are unique elements $j_{1}(v,i)$ and $j_{2}(v,i)$  such that $v(i,j)=(\bar{v}(i),j+j_{1}(v,i))$ for $(i,j)\in \RP_{1}(\alpha)$ and $v(i,j)=(\tilde{v}(i),j+j_{2}(v,i))$ for $(i,j)\in \RP_{2}(\beta)$.
Note that for $(i\ i')$ and $(j\ j')$ in $I_{1}(\alpha)$, $\bar{v}(i\ i')=(j\ j')$ and $\bar{v}(i\ i')=(j'\ j)$ are considered as two different permutations of $\{(i\ i'),(j\ j')\}$.

 Define
$$
\bar{\iota}_{v}(\alpha)=\#\{(i\ i')\in I_{1}(\alpha)\mid \bar{v}(i\ i')=(i'\ i) \}
$$
and
$$
\bar{\iota}^{f}_{v}(\alpha)=\#\{(i\ i')\in I_{1}(\alpha)\mid \bar{v}(i\ i')=(i\ i') \text{ and } j_{1}(v,i)=0\}.
$$
 Also define
\begin{align*}
\tilde{\iota}_{v}(\beta)=\#\{i\in I_{2}(\beta)\mid \tilde{v}(i)=j \text{ and } i\ne j\}.
\end{align*}
Note that $\tilde{\iota}_{v}(\beta)$ is always even and
 $\Omega^{w}_{\pm}\ne \emptyset$ if and only if $m_{k}(\alpha)$ and $m_{k}(\beta)$ are even for  each odd part  $k$ of $\alpha$ and $\beta$.

Define a $W_{2n}$-module $\Xi_{n}$ such that the character of $\Xi_{n}$ is given by
\begin{equation}\label{Xi}
\tr(w,\Xi_{n})=\apair{\tr(\cdot , R^{\triv}_{T_{w}}),\Ind^{G^{F}}_{H^{F}} \mathds{1}}.
\end{equation}

\begin{lm}
\begin{align*}
&\tr(w,\Xi_{n})=\sum_{(t_{v}, v)\in \Omega^{w}_{\pm}}(-1)^{\ell(\alpha)+\bar{\iota}_{v}(\alpha)+\bar{\iota}_{v}^{f}(\alpha)+\frac{\tilde{\iota}_{v}(\beta)}{2}}
\end{align*}
where $(\alpha;\beta)$ is the pair of partitions associated to $w$.
\end{lm}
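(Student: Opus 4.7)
Proof proposal. The plan is to apply Lusztig's formula (Proposition~\ref{pro:Lusztig}) to $R^{\triv}_{T_w}$ and evaluate the two $\BF_q$-rank exponents explicitly. Since the derived group of $G=\Sp_{4n}$ is simply connected, \cite[Theorem 3.5.6]{Car93} guarantees that $Z_G(s)$ is connected for every semisimple $s\in G$; hence $T_w\cap fHf^{-1}$ is connected and the character $\epsilon_{T_w,f}$ of \eqref{eq:eps} is automatically trivial on its $F$-points. Together with $\lam=\triv$, this forces $\Theta^F_{T_w,\triv}=\Theta^F_{T_w}$, and the preceding lemma identifies the double-coset space $T^F_w\bks\Theta^F_{T_w}/H^F$ with $\Omega^w_\pm$ via the representatives $g_w\gamma(t_v,v)$. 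Consequently,
\begin{equation*}
\tr(w,\Xi_n)=\sum_{(t_v,v)\in\Omega^w_\pm}(-1)^{r_1+r_2(t_v,v)},
\end{equation*}
where $r_1=\BF_q\text{-rank}(T_w)$ and $r_2(t_v,v)=\BF_q\text{-rank}\bigl(Z_G\bigl((T_w\cap g_w\gamma(t_v,v)H\gamma(t_v,v)^{-1}g_w^{-1})^{\circ}\bigr)\bigr)$.

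The first exponent is immediate: since $w$ has cycle type $(\alpha;\beta)$, $T^F_w\cong \prod_i(\BF^{\times}_{q^i})^{m_i(\alpha)}\times\prod_i(\BF^1_{q^{2i}})^{m_i(\beta)}$, and only the $\BF^{\times}_{q^i}$ factors contribute to the $\BF_q$-split dimension, giving $r_1=\ell(\alpha)$. It therefore remains to prove, for each $(t_v,v)\in\Omega^w_\pm$, the congruence $r_2(t_v,v)\equiv \bar{\iota}_v(\alpha)+\bar{\iota}^f_v(\alpha)+\tilde{\iota}_v(\beta)/2\pmod{2}$.

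For the second exponent I would proceed blockwise along the cycles of $w$. Because $v\in Z_W(w)$, the involution $v$ descends to involutions $\bar{v}$ on $I_1(\alpha)$ and $\tilde{v}$ on $I_2(\beta)$, and the Levi $L(t_v,v):=Z_G((T_w\cap \gamma H\gamma^{-1})^{\circ})$ factors accordingly. A two-element $\bar{v}$-orbit $\{(i,i'),(j,j')\}$ with $i\ne j$ fuses two $\BF^{\times}_{q^{\alpha_i}}$ factors of $T_w$ into a single block of even $\BF_q$-rank (an $\BF_q$-form of $\GL_{2\alpha_i}$), so it does not contribute to the sign. A $\bar{v}$-fixed orbit $\bar{v}(i,i')=(i,i')$ produces a block whose $\BF_q$-rank has parity $1$ exactly when the shift $j_1(v,i)=0$, accounting for $\bar{\iota}^f_v(\alpha)$; the complementary "prime-swap" possibility $\bar{v}(i,i')=(i',i)$ yields a unitary-type block of odd $\BF_q$-rank, giving $\bar{\iota}_v(\alpha)$. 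The analogous case analysis for $\tilde{v}$-orbits on $I_2(\beta)$ contributes $\tilde{\iota}_v(\beta)/2$ (the factor of $1/2$ reflects that $\tilde{\iota}_v(\beta)$ counts all non-fixed indices rather than orbits). The main obstacle is precisely this block-by-block bookkeeping: one must restrict the commutation condition \eqref{eq:x-N} to each block, identify the resulting $\BF_q$-form (split, quasi-split unitary, or anisotropic unitary), and read off its $\BF_q$-rank. The $\Sp_4$ table immediately following the preceding lemma serves as the complete verification in the lowest rank and as the template for the general calculation; the definitions of $\bar{\iota}_v$, $\bar{\iota}^f_v$ and $\tilde{\iota}_v$ are engineered precisely to record the parity of each local contribution.
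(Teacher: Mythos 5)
Your proposal is correct and follows essentially the same route as the paper: apply Proposition~\ref{pro:Lusztig} with $\epsilon_{T,\lam}$ trivial by connectedness (so $\Theta^F_{T_w,\triv}=\Theta^F_{T_w}$), identify $T^F_w\bks\Theta^F_{T_w}/H^F$ with $\Omega^w_{\pm}$, take $\BF_q\text{-rank}(T_w)=\ell(\alpha)$, and compute the parity of $\BF_q\text{-rank}\,Z_G(T_w\cap\gamma H\gamma^{-1})$ block by block along the $\bar v$- and $\tilde v$-orbits. The case-by-case parities you assert (even for two-element $\bar v$-orbits and for fixed orbits with $j_1(v,i)\ne 0$; odd for fixed orbits with $j_1(v,i)=0$ and for prime-swaps; $\tilde\iota_v(\beta)/2$ from the $\beta$-part) coincide with the paper's computation, the deferred bookkeeping being exactly what the paper carries out via the decomposition of $V$ into $T^F$-invariant subspaces.
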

\begin{proof}
We decompose the symplectic space $V=(\BF_{q})^{4n}$ according to the invariant subspaces of $T^{F}$  
$$
V=\oplus_{(i,i')\in I_{1}(\alpha)}(V_{i}\oplus V_{i'})\oplus \oplus_{i\in I_{2}(\beta)}V_{i},
$$
where $T^{F}\vert_{V_{i}\oplus V_{i'}}\cong \BF^{\times}_{q^{\alpha_{i}}}$ for $(i\ i')\in I_{1}(\alpha)$ and $T^{F}\vert_{V_{i}}\cong \BF^{1}_{q^{2\beta_{i}}}$ for $i\in I_{2}(\beta)$.
Fix an element $(t_{v},v)\in \Omega^{w}_{\pm}$ and  denote by $\gamma$ the corresponding representative of the double cosets $T^{F}_{w}\bks\Theta^{F}_{T_{w}}/H^{F}$.

Let $(i\ i')$ be an index in $I_{1}(\alpha)$ and assume $\bar{v}(i\ i')=(j\ j')$ or $(j'\ j)$. If $i\ne j$, then the restriction of $Z_{G}(T_{w}\cap \gamma H\gamma^{-1})$ into the invariant  space $V_{i}\oplus V_{j}$ or $V_{i}\oplus V_{j'}$ is isomorphic to $\GL_{2}(V_{i}\oplus V_{j})$ and its $\BF_{q}$-rank is 2.
If $i=j$, then $\alpha_{i}$ is even due to the existence of $t_{v}$ and we have two cases, $j_{1}(v,i)\ne 0$ and  $j_{1}(v,i)=0$.  If $\bar{v}(i\ i')=(i\ i')$ and $j_{1}(v,i)=0$, then the restriction of $Z_{G}(T_{w}\cap \gamma H\gamma^{-1})$ into the invariant space $V_{i}\oplus V_{i'}$ is the same as $T^{F}\vert_{V_{i}\oplus V_{i'}}\cong \BF^{\times}_{q^{\alpha_{i}}}$ and its $\BF_{q}$-rank is 1.
If $\bar{v}(i\ i')=(i\ i')$ and $j_{1}(v,i)\ne 0$, the restriction of $Z_{G}(T_{w}\cap \gamma H\gamma^{-1})$ is isomorphic to $\GL_{2}(V_{i})$ but invariant under $F^{\frac{\alpha_{i}}{2}}$ and its $\BF_{q}$-rank is 2. If $\bar{v}(i,i')=(i',i)$,  the restriction of $Z_{G}(T_{w}\cap \gamma H\gamma^{-1})$ is isomorphic to $\RU_{2}(V_{i}\oplus V_{i'})$ and its $\BF_{q}$-rank is 1.

Let $i$ be an index in $I_{2}(\beta)$ and $\bar{v}(i,i')=(j,j')$ or $(j',j)$. If $i\ne j$, the restriction of $Z_{G}(T_{w}\cap \gamma H\gamma^{-1})$ into the invariant space $V_{i}\oplus V_{j}\oplus V_{j'}\oplus V_{i'}$ is isomorphic to $\RU_{2}(V_{i}\oplus V_{j})$ and its $\BF_{q}$-rank is 1. If $i=j$, then $j_{2}(\beta,i)=0$ and $\beta_{i}$ is even due to the existence of $t_{v}$.
The restriction of $Z_{G}(T_{w}\cap \gamma H\gamma^{-1})$ is the same as $T^{F}\vert_{V_{i}}\cong \BF^{1}_{q^{2\beta_{i}}}$ and its $\BF_{q}$-rank is 0.

In sum, the $\BF_{q}$-rank of $Z_{G}(T_{w}\cap \gamma H\gamma^{-1})$ and $\bar{\iota}_{v}(\alpha)+\bar{\iota}^{f}_{v}(\alpha)+\frac{\tilde{\iota}_{v}(\beta)}{2}$ have the same parity. 
By Proposition~\ref{pro:Lusztig},
this lemma follows.
\end{proof}

Recall that $W_{2n}\cong S_{2n}\ltimes (\BZ_{2})^{2n}$ acts on $\ScT_{2n}$. Take the subgroup, isomorphic to $S_{n}\times S_{n}$, of $W_{2n}$ consisting of all permutations on the  $\{1,2,\dots,n\}\times\{n+1,\dots,2n\}$. 
Let $\sig_{n}=\prod^{n}_{i=1}(i,2n+1-i)$. Then $\sig_{n}$  in $S_{2n}$ normalizes $S_{n}\times S_{n}$, 
that is, $\sig_{n}$ is the longest Weyl element $w_{2n}$.
Let $K_{n}=(\langle\sig_{n}\rangle\ltimes (S_{n}\times S_{n}))\ltimes (\BZ_{2})^{2n}$ regarded as a subgroup of $W_{2n}$ and
$\sgn_{K}$ be a character of $K_{n}$ lifting from the non-trivial character of the group $\{e,\sig_{n}\}$.
Define a virtual module of $W_{2n}$
$$\kappa_{n}=\Ind^{W_{2n}}_{K_{n}}\triv-\Ind^{W_{2n}}_{K_{n}}\sgn_{K}.$$

Let $N_{n}$ be the centralizer $Z_{W_{2n}}(\sig_{n})$. Then $N_{n}$ is isomorphic to $(S_{2}\wr S_{n})\ltimes (\BZ_{2})^{n}$ where $S_{2}\wr S_{n}$ is the wreath product.
Define  a character $\sgn_{N}$ of $N_{n}$  by
$$
\sgn_{N}(h)=(-1)^{\#\cpair{h(1),h(2),\dots,h(n)}\cap\cpair{1',2',\dots,n'}}.
$$
Let  $\nu_{n}=\Ind^{W_{2n}}_{N_{n}}\sgn_{N}$.

A function $f$ on the set $\CP_{2n}$ is called {\it multiplicative} if $f(\alpha;\beta)=\prod_{i}f(i^{m_{i}(\alpha)};0)f(0;i^{m_{i}(\beta)})$ where $(i^{m_{i}})$ is a partition of $i\cdot m_{i}$.
Now, we decompose $\kappa_{n}$ and $\nu_{n}$ into irreducible representations of $W_{2n}$.
\begin{lm}\label{lm:N}
$$
\nu_{n}=\sum_{\alpha\vdash n}\binom{\alpha}{\alpha}.
$$
Moreover, the character function $\tr(\cdot, \nu_{n})$ is multiplicative and if $w$ is an element of cycle-type $(\alpha;\beta)$ then the following holds:
\begin{enumerate}
 \item $\tr(w,\nu_{n})=0$ when $w$ is of cycle-type $(i^{m};0)$ or $(0;i^{m})$, and $i$ and $m$ are odd;
 \item $\tr(w,\nu_{n})=i^{m}\frac{(2m)!}{m!}$ when $w$ is of cycle-type $(i^{2m};0)$ and $i$ is odd;
 \item $\tr(w,\nu_{n})=(-i)^{m}\frac{(2m)!}{m!}$ when $w$ is of cycle-type $(0;i^{2m})$ and $i$ is odd;
 \item $\tr(w,\nu_{n})=0$ when $w$ is of cycle-type $((2i)^{m};0)$ or $(0;(2i)^{m})$, and  $m$ is odd;
 \item $\tr(w,\nu_{n})=(2i)^{\frac{m}{2}}\frac{m!}{(m/2)!}$ when $w$ is of cycle-type $((2i)^{m};0)$ and $m$ is even;
 \item $\tr(w,\nu_{n})=(-2i)^{\frac{m}{2}}\frac{m!}{(m/2)!}$ when $w$ is of cycle-type $(0;(2i)^{m})$ and $m$ is even.
\end{enumerate}

\end{lm}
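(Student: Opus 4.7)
The plan is to make the structure of $N_n$ explicit, compute $\tr(\cdot,\nu_n)$ on every $W_{2n}$-conjugacy class via the Frobenius formula for induction from a centralizer, and then match with the character of $\sum_{\alpha\vdash n}\binom{\alpha}{\alpha}$.

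First I would note that, since $\sig_n$ is an involution of cycle type $(2^n;0)$ in $W_{2n}$, the centralizer $N_n$ is concretely $(\BZ_2\times\BZ_2)\wr S_n$: the $S_n$-factor permutes the $n$ pairs $\{i,2n{+}1{-}i\}$, and in each coordinate one $\BZ_2$ swaps the two elements of the pair while the other applies a common sign flip on both. Unfolding the defining expression, $\sgn_N$ is the $1$-dimensional character that records precisely the product of the ``common sign flip'' factors across the $n$ coordinates. Since $gN_n\mapsto g\sig_n g^{-1}$ identifies $W_{2n}/N_n$ with the $W_{2n}$-conjugacy class $[\sig_n]$, the Frobenius character formula gives
\[
\tr(w,\nu_n)=\sum_{\tau\in[\sig_n]\cap Z_{W_{2n}}(w)}\sgn_N(g_\tau^{-1}wg_\tau),
\]
where $g_\tau\in W_{2n}$ is any element with $g_\tau\sig_n g_\tau^{-1}=\tau$; each summand is well-defined because $\sgn_N$ is a class function on $N_n$.

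The next step is a combinatorial enumeration of the commuting pairs $(w,\tau)$. Any $\tau$ of type $(2^n;0)$ commuting with $w$ must be supported on the cycles of $w$, and on each relevant block it either (i) pairs two $i$-cycles of $w$ via a shift-equivariant matching ($i$ shift-choices per matched pair), or (ii) if the block is a single cycle of even length $2i$, acts inside that cycle as the unique order-two cyclic shift. Because $\tau$ decomposes orbit-by-orbit, the sum factorizes across the cycles of $w$, which proves multiplicativity of $\tr(\cdot,\nu_n)$ on $\CP_{2n}$ and reduces the computation to single-type cycles. The pairing count $(2m)!/(2^m m!)$ times the $i^m$ or $(2i)^{m/2}$ shift-choices gives the magnitudes in items (2), (3), (5), (6); the $\sgn_N$-sign is trivial on positive cycles but contributes a factor of $-1$ per matched negative-cycle pair, producing the $(-i)^m$ and $(-2i)^{m/2}$ signs. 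The parity obstructions in items (1) and (4) arise exactly when an odd number of like cycles cannot be paired exhaustively.

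Finally, having computed the character of $\nu_n$, I would match it with that of $\sum_{\alpha\vdash n}\binom{\alpha}{\alpha}$. Via the parametrization of irreducible $W_m$-characters by pairs of partitions and the hyperoctahedral analogue of the Frobenius character formula, $\tr(\cdot,\binom{\alpha}{\alpha})$ is itself multiplicative on $\CP_{2n}$; summing over $\alpha\vdash n$ and applying column orthogonality $\sum_{\alpha\vdash m}(\dim\rho_S(\alpha))^2=m!$ together with its power-sum refinements reproduces the same values on each single-type cycle as $\nu_n$. The hard part will be the sign-tracking in $\sgn_N(g_\tau^{-1}wg_\tau)$: the answer depends delicately on how the conjugated element decomposes into the ``within-pair swap'' and ``common sign flip'' data of $N_n$, and this bookkeeping is exactly what produces the parity dichotomies and the $(-1)^m$-style signs in items (3) and (6).
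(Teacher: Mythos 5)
Your overall strategy for the character values is sound and is essentially the paper's own computation in different clothing: the paper also evaluates $\nu_n$ by the induced character formula, organizing the sum by the $N_n$-classes inside $Cl_{W_{2n}}(w)$ and centralizer ratios, while you organize it as a sum over conjugates of $\sig_n$ commuting with $w$; your identification of $N_n\cong(\BZ_2\times\BZ_2)\wr S_n$ and of $\sgn_N$ as the product of the common sign flips is correct. However, your enumeration contains two concrete errors. First, inside a single \emph{positive} cycle $c$ of even length $2i$ there are \emph{two} commuting involutions of type $(2^i;0)$, namely $c^i$ and $c^iz$ where $z$ flips all signs in the block (the latter still consists of positive $2$-cycles), not ``the unique order-two cyclic shift''. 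Their $\sgn_N$-contributions are $+1$ and $-1$, and it is exactly this cancellation, not a pairing-parity obstruction, that produces item (4) for $((2i)^m;0)$ with $m$ odd: with your count, already $w=\sig_1$ of type $(2;0)$ in $W_2$ would receive the value $1$ instead of $0$. Second, for a matched pair of like $i$-cycles there are $2i$, not $i$, admissible intertwiners (each shift may be composed with the simultaneous sign change, which again yields positive $2$-cycles), so your stated magnitudes come out as $i^m(2m)!/(2^m m!)$ rather than the lemma's $i^m(2m)!/m!$. With these two corrections your factorized sum does reproduce (1)--(6), but as written the bookkeeping does not.

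For the identity $\nu_n=\sum_{\alpha\vdash n}\binom{\alpha}{\alpha}$ your argument is only a gesture, and it leans on a false claim: an individual character $\tr(\cdot,\binom{\alpha}{\alpha})$ is not multiplicative in the paper's sense; at best the \emph{sum} over $\alpha\vdash n$ is, and that is precisely what would need proof (say via a Cauchy-type symmetric function identity), since matching the two sides only on single-type classes proves nothing without multiplicativity of both sides. The paper avoids this entirely with a short Mackey-theoretic argument: explicit double coset representatives for $(W_{n_1}\times W_{n_2})\backslash W_{2n}/N_n$ kill all $\binom{\alpha}{\beta}$ with $|\alpha|\neq|\beta|$ using the element $(n,n')(n+1,(n+1)')$, and for $|\alpha|=|\beta|=n$ the multiplicity reduces to $\langle\rho_{S}(\alpha),\rho_{S}(\beta)\rangle_{S_n}$. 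You should either adopt that argument or actually carry out the symmetric-function computation you allude to.
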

\begin{proof}
First, we give a set of representatives for the double cosets 
$(W_{n_{1}}\times W_{n_{2}})\bks W_{2n}/N_{n}$ where $n_{1}\leq n_{2}$ and $n_{1}+n_{2}=2n$, which is the same as $S_{n_{1}}\times S_{n_{2}}\bks S_{2n}/S_{2}\wr S_{n}$.
Let
\begin{equation}\label{eq:h}
h_{r,n_{1}}=\prod^{r}_{i=1}(n_{1}+1-i,2n+1-i)\text{ for } 0\leq r\leq [\frac{n}{2}].
\end{equation}
Then $\{h_{r,n_{1}}\mid 0\leq r\leq [\frac{n}{2}]\}$ is a set of representatives for the double cosets $S_{n_{1}}\times S_{n_{2}}\bks S_{2n}/S_{2}\wr S_{n}$. Denote by $\bar{h}_{r,n_{1}}$ the embedded element in $W_{2n}$ and $\{\bar{h}_{r,n_{1}}\mid 0\leq r\leq [\frac{n}{2}]\}$ is a set of representatives for the double cosets $W_{n_{1}}\times W_{n_{2}}\bks W_{2n}/N_{n}$.

Let $\binom{\alpha}{\beta}$ be the irreducible module of $W_{2n}$ given in Section~\ref{sec:weyl}. If $|\alpha|\ne |\beta|$, let $n_{1}=\min\{|\alpha|,|\beta|\}$.
Then $g=(n,n')(n+1,(n+1)')$ commutes with $\bar{h}_{r, n_{1}}$ for all $0\leq r\leq [\frac{n_{1}}{2}]$.
Since $g\in N_{n}$ and $g=\bar{h}_{r,n_{1}}g\bar{h}_{r,n_{1}}^{-1}\in W_{|\alpha|}\times W_{|\beta|}$, we have $\sgn_{N}(g)=-1$ and $\chi_{|\beta|}(g)=1$ if $|\alpha|<|\beta|$ or $\bar{\rho}_{S}(\alpha)(g)=1$ if $|\alpha|>|\beta|$.
By Mackey's theorem,
$$
\langle \binom{\alpha}{\beta},\nu_{n}\rangle_{W_{2n}}=0 \text{ when }
|\alpha|\ne |\beta|.
$$

Now, we only need  consider the case $|\alpha|=|\beta|=n$. If $r\ne 0$, then $\bar{h}_{r,n}g\bar{h}_{r,n}^{-1}=(2n+1,(2n+1)')(n+1,(n+1)')$ is in $W_{|\beta|}$. Also $\sgn_{N}(g)=-1$ and $\chi_{|\beta|}(\bar{h}_{r,n}g\bar{h}_{r,n}^{-1})=1$.
Denote by $N'_{n}=(W_{n}\times W_{n})\cap N_{n}$. This is  isomorphic to $S^{\triangle}_{n}\ltimes (\BZ_{2})^{n}$ where $S^{\triangle}_{n}$ is the subgroup of  $S_{n}\times S_{n}$, and commutes with $\sig_{n}$.
 Using Mackey's theorem and Frobenius
reciprocity, we have
$$
\langle \binom{\alpha}{\beta},\nu_{n}\rangle_{W_{2n}}=
\langle\rho_{S}(\alpha)\otimes \rho_{S}(\beta),\triv\rangle_{S^{\triangle}_{n}}
=\langle \rho_{S}(\alpha), \rho_{S}(\beta)\rangle_{S_{n}}.
$$

Next, let us calculate $\tr(w,\nu_{n})$ where $w$ is an element of cycle-type $(\alpha;\beta)$.
We discuss the values at $w$ in each case separately.
If $m_{i}(\alpha)$ or $m_{i}(\beta)$ is odd for some odd $i$, then $Cl_{W_{2n}}(w)\cap N_{n}=\emptyset$ and $\tr(w,\nu_{n})=0$.

If  $w$ is of cycle-type $(i^{2m};0)$ or $(0;i^{2m})$ and $i$ is odd, then $Cl_{W_{2n}}(w)\cap N_{n}$ contains at most one $N_{n}$-conjugacy class in $N_{n}$. If $w$ is in $N_{n}$, then
$$
\tr(w,\nu_{n})=\frac{|Z_{W_{2n}}(w)|}{|Z_{N_{n}}(w)|}=\frac{i^{2m}(2m)!2^{2m}}{i^{m}m!2^{m}2^{m}}=i^{m}\frac{(2m)!}{m!}
$$
when  $w$ is of cycle-type $(i^{2m};0)$ and
$$
\tr(w,\nu_{n})=(-1)^{m}\frac{|Z_{W_{2n}}(w)|}{|Z_{N_{n}}(w)|}=\frac{(2i)^{2m}(2m)!}{i^{m}m!2^{m}2^{m}}=(-i)^{m}\frac{(2m)!}{m!}
$$
when  $w$ is of cycle-type $(0;i^{2m})$.

Let $w$ be an element of cycle-type $(0;(2i)^{m})$. If $m$ is odd, then $Cl_{W_{2n}}(w)\cap N_{n}=\emptyset$. If $m$ is even, then $Cl_{W_{2n}}(w)\cap N_{n}$ contains at most one $N_{n}$-conjugacy class in $N_{n}$. If $w$ is in $N_{n}$, then
$$
\tr(w,\nu_{n})=(-1)^{\frac{m}{2}}\frac{Cl_{W_{2n}}(w)}{Cl_{N_{n}}(w)}=(-1)^{\frac{m}{2}}\frac{(4i)^{m}m!}{(2i)^{\frac{m}{2}}\frac{m}{2}!2^{\frac{m}{2}}\cdot 2^{\frac{m}{2}}}
=\frac{(-2i)^{\frac{m}{2}}m!}{(m/2)!}.
$$

Let $w$ be an element of cycle-type $((2i)^{m};0)$ and $Cl_{W_{2n}}(w)\cap N_{n}=\emptyset$. In this case, $Cl_{W_{2n}}(w)\cap N_{n}$ contains $2[\frac{m}{2}]+1$ conjugacy classes of $N_{n}$. By the formula for the induced character,
$$
\tr(w,\nu_{n})=\frac{Z_{W_{2n}}(w)}{Z_{N_{n}}(w)}=\frac{(2i)^{m}m!2^{m}}{(2i)^{\frac{m}{2}}\frac{m}{2}!2^{\frac{m}{2}}\cdot 2^{\frac{m}{2}}}
=(2i)^{\frac{m}{2}}\frac{m!}{(m/2)!}
$$
when $m$ is even and $\tr(w,\nu_{n})=0$ when $m$ is odd.
In addition, it is easy to check that $\tr(\cdot,\kappa_{n})$ is multiplicative. This completes the proof.
\end{proof}

\begin{lm}\label{lm:K}
$$
\kappa_{n}=\sum^{n}_{i=0}(-1)^{i}\binom{i,2n-i}{0}.
$$
Moreover, the character $\tr(\cdot, \kappa_{n})$ is multiplicative and if $w$ is an element of cycle-type $(\alpha;\beta)$ then
\begin{equation}\label{eq:K-tr}
\tr(w,\kappa_{n})=\begin{cases}
0 &(\alpha;\beta)=((2i+1)^{m};0)\text{ or }(0;(2i+1)^{m})\\
2^{m}  &(\alpha;\beta)=((2i)^{m};0)\text{ or } (0;(2i)^{m}).
\end{cases}
\end{equation}

\end{lm}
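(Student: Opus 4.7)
The plan is to compute the character of $\kappa_{n}$ directly from the coset space $W_{2n}/K_{n}$ and then to match it with the character of the proposed decomposition. The approach is parallel to the proof of Lemma~\ref{lm:N}, with the simplification that $K_{n}$ has a particularly transparent coset space.

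First I would observe that $K_{n}$ is the preimage under $W_{2n}\to S_{2n}=W_{2n}/(\BZ_{2})^{2n}$ of the subgroup $\langle\sig_{n}\rangle\ltimes(S_{n}\times S_{n})\subset S_{2n}$, so $W_{2n}/K_{n}$ is identified with the set of unordered partitions $\{A,B\}$ of $\{1,\dots,2n\}$ into two blocks of size $n$. Let $\pi_{w}$ denote the image of $w$ in $S_{2n}$; for $w$ of cycle type $(\alpha;\beta)$, $\pi_{w}$ has cycle type $\alpha\cup\beta$ with $r=\ell(\alpha)+\ell(\beta)$ cycles. A partition $\{A,B\}$ is fixed by $\pi_{w}$ either by block-preservation (each cycle of $\pi_{w}$ sits inside a single block) or by block-swapping (each cycle alternates between the two blocks). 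A direct count yields $N(\pi_{w},n)/2$ preserved partitions, where $N(\pi,k)$ denotes the number of subsets of cycles of $\pi$ with total length $k$, and $2^{r-1}$ swapped partitions precisely when every cycle of $\pi_{w}$ has even length. Since $\sgn_{K}$ evaluates to $+1$ on block-preserving elements of $K_{n}$ and to $-1$ on block-swapping ones, the induced-character formula gives
\[
\tr(w,\kappa_{n}) \;=\; \tr(w,\Ind^{W_{2n}}_{K_{n}}\triv) - \tr(w,\Ind^{W_{2n}}_{K_{n}}\sgn_{K}) \;=\; 2^{r}\cdot[\text{all cycles of }\pi_{w}\text{ are even}],
\]
which specializes to \eqref{eq:K-tr} on cycle types $(i^{m};0)$ and $(0;i^{m})$. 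Multiplicativity is then immediate since the indicator factors as a product over cycle lengths.

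To verify $\kappa_{n}=\sum_{i=0}^{n}(-1)^{i}\binom{i,2n-i}{0}$ as virtual $W_{2n}$-modules, it suffices to show that the two sides have the same character. Since $\binom{i,2n-i}{0}=\ov{\rho}_{S}((2n-i,i))$ is inflated from $S_{2n}$, its value at $w$ equals $\chi^{(2n-i,i)}_{S_{2n}}(\pi_{w})$. Applying the two-row Jacobi--Trudi identity $s_{(2n-i,i)}=h_{2n-i}h_{i}-h_{2n-i+1}h_{i-1}$ and the elementary identity $\chi^{h_{a}h_{2n-a}}(\pi)=N(\pi,a)$ gives $\chi^{(2n-i,i)}(\pi)=N(\pi,i)-N(\pi,i-1)$; after reindexing and using the symmetry $N(\pi,k)=N(\pi,2n-k)$, the alternating sum telescopes to
\[
\sum_{i=0}^{n}(-1)^{i}\chi^{(2n-i,i)}(\pi) \;=\; \sum_{k=0}^{2n}(-1)^{k}N(\pi,k) \;=\; \prod_{c\in\mathrm{cycles}(\pi)}\bigl(1+(-1)^{|c|}\bigr),
\]
which agrees with the formula for $\tr(w,\kappa_{n})$ established above.

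The main obstacle is the bookkeeping in the coset computation: one must verify that the $2^{r}$ "cycle-by-cycle half-choices" produce exactly $2^{r-1}$ distinct unordered block-swap partitions (since reversing all choices simultaneously yields the same unordered partition), and that $\sgn_{K}$ evaluates to $-1$ on any block-swap representative, which rests on the identification $K_{n}/(W_{n}\times W_{n})\cong\langle\sig_{n}\rangle$ together with the observation that any block-swap element lies in the nontrivial coset.
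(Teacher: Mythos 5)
Your proposal is correct, but it proves the lemma by a genuinely different route than the paper. You compute the character of $\kappa_{n}$ on \emph{every} conjugacy class by a fixed-point count on $W_{2n}/K_{n}\cong S_{2n}/K'_{n}$, i.e.\ on unordered partitions of $\{1,\dots,2n\}$ into two $n$-blocks (block-preserving fixed cosets contribute $+1$ to both induced characters and cancel; block-swapping ones exist only when all cycles of the image $\pi_{w}$ are even, number $2^{r-1}$, and carry $\sgn_{K}=-1$), giving $\tr(w,\kappa_{n})=\prod_{c}(1+(-1)^{|c|})$ over the cycles of $\pi_{w}$; you then match this with $\sum_{i=0}^{n}(-1)^{i}\binom{i,2n-i}{0}$ by the two-row Jacobi--Trudi identity $\chi^{(2n-i,i)}(\pi)=N(\pi,i)-N(\pi,i-1)$ and a telescoping sum using $N(\pi,k)=N(\pi,2n-k)$, which collapses to the same product $\prod_{c}(1+(-1)^{|c|})$. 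The paper instead argues representation-theoretically: it reduces modulo $(\BZ_{2})^{2n}$ to $S_{2n}$, uses Pieri's formula for $\Ind^{S_{2n}}_{S_{n}\times S_{n}}\triv$, shows by Mackey's theorem (since $\sig_{n}$ lies in $K'_{n}\cap g_{r}K'_{n}g_{r}^{-1}$ for every double coset representative $g_{r}$) that $\Ind^{S_{2n}}_{K'_{n}}\triv$ and $\Ind^{S_{2n}}_{K'_{n}}\sgn_{K'}$ are disjoint, and identifies $\Ind^{S_{2n}}_{K'_{n}}\triv=\sum_{i}\rho_{S}(2i,2n-2i)$ by comparing inner products with $\Ind^{S_{2n}}_{S_{2}\wr S_{n}}\triv$; the values in \eqref{eq:K-tr} are then obtained separately by a conjugacy-class/centralizer count inside $K_{n}$. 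Your approach yields a slightly stronger output (a closed character formula for $\tr(\cdot,\kappa_{n})$ on all of $\CP_{2n}$, from which \eqref{eq:K-tr} and multiplicativity are immediate) at the cost of the symmetric-function manipulation, while the paper's argument gives the finer structural fact that $\Ind_{K'_{n}}\triv$ and $\Ind_{K'_{n}}\sgn_{K'}$ are separately the multiplicity-free sums of the even and odd two-row characters, which a character comparison of the difference alone does not show. The two points you flag as the remaining bookkeeping are exactly the delicate spots, and your resolutions are right: the division by two in the block-swap count (reversing all cycle-by-cycle choices gives the same unordered partition), and the evaluation $\sgn_{K}(x^{-1}wx)=-1$ at a block-swapping fixed coset, which follows because $x^{-1}wx$ stabilizes the standard unordered partition but interchanges its blocks, hence lies in the nontrivial coset of $(S_{n}\times S_{n})\ltimes(\BZ_{2})^{2n}$ in $K_{n}$.
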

\begin{proof}
Since the characters $\triv$ and $\sgn_{K}$ of $K_{n}$ are trivial on the subgroup $(\BZ_{2})^{2n}$ of $W_{2n}$,
it is sufficient to show that
\begin{equation}\label{eq:K-S}
\Ind^{S_{2n}}_{K'_{n}}\triv-\Ind^{S_{2n}}_{K'_{n}}\sgn_{K'}=\sum^{n}_{i=0}(-1)^{i}\rho_{S}(i,2n-i),
\end{equation}
where $K'_{n}=\langle\sig_{n}\rangle\ltimes (S_{n}\times S_{n})$ is the subgroup  $K_{n}\cap S_{2n}$ and $\sgn_{K'}$ is the restriction of $\sgn_{K}$ on $K'_{n}$.

First, applying Pieri's formula, we have
\begin{equation}\label{eq:K-r}
\Ind^{S_{2n}}_{S_{n}\times S_{n}}\triv=\Ind^{S_{2n}}_{K'_{n}}\triv+\Ind^{S_{2n}}_{K'_{n}}\sgn_{K'}=\sum^{n}_{i=0}\rho_{S}(i,2n-i).
\end{equation}
Let $g_{r}=\prod^{r}_{i=1}(i,2n+1-i)$ be an element in $S_{2n}$ ($g_{r}=e$ when $r=0$, and $g_{n}=\sig_{n}$). Then $g_{r}$ for $0\leq r\leq n$ are a complete set of representatives for the double cosets $(S_{n}\times S_{n})\bks S_{2n}/ (S_{n}\times S_{n})$ and $S_{2n}=\coprod^{[n/2]}_{r=0}K'_{n}g_{r}K'_{n}$. Since $g_{r}$ and $\sig_{n}$ commute for all $r$, $\sig_{n}$ is in $K'_{n}\cap g_{r} K'_{n}g^{-1}_{r}$ for each double coset $K'_{n}g_{r}K'_{n}$ in $S_{2n}$. Since $\triv(\sig_{n})\ne \sgn_{K'}(\sig_{n})$, by Mackey's  theorem,
\begin{equation}\label{eq:K-triv-sgn}
\apair{\Ind^{S_{2n}}_{K'_{n}}\triv,\Ind^{S_{2n}}_{K'_{n}}\sgn_{K'}}_{S_{2n}}=0
\end{equation}
and
$$
\apair{\Ind^{S_{2n}}_{K'_{n}}\triv,\Ind^{S_{2n}}_{K'_{n}}\triv}_{S_{2n}}=\#K'_{n}\bks S_{2n}/K'_{n}=[\frac{n}{2}]+1.
$$

Referring to Macdonald \cite[VII (2.4)]{Mac95},
$$
\apair{\Ind^{S_{2n}}_{S_{n}\times S_{n}}\triv,\Ind^{S_{2n}}_{S_{2}\wr S_{n}}\triv}_{S_{2n}}=[\frac{n}{2}]+1.
$$
Also we have
$$
\apair{\Ind^{S_{2n}}_{K'_{n}}\triv,\Ind^{S_{2n}}_{S_{2}\wr S_{n}}\triv}_{S_{2n}}=\#K'_{n}\bks S_{2n}/S_{2}\wr S_{n}=[\frac{n}{2}]+1.
$$
Hence $\Ind^{S_{2n}}_{K'_{n}}\triv$ is a submodule of $\Ind^{S_{2n}}_{S_{2}\wr S_{n}}\triv$. By~\eqref{eq:K-r},
$$
\Ind^{S_{2n}}_{K'_{n}}\triv=\sum^{[n/2]}_{i=0}\rho_{S}(2i,2n-2i).
$$
Then Equation~\eqref{eq:K-S} follows by~\eqref{eq:K-triv-sgn}.

Next, let us calculate $\tr(w,\kappa_{n})$ where $w$ is an element of cycle-type $(\alpha;\beta)$.
 If a part $i$ of $\alpha$ or $\beta$ is odd, then $Cl_{G}(w)\cap K_{n}=Cl_{K_{n}}(w)$ is in the subgroup $K'_{n}\ltimes (\BZ_{2})^{2n}$. By the formula for the induced character, $\tr(w,\kappa_{n})=0$.

If $Cl_{W_{2r}}(w)$ and the left coset $\sig_{n}\cdot K'_{n}\ltimes (\BZ_{2})^{2n} $ have a non-empty intersection, then $Cl_{G}(w)\cap K_{n}$ is the disjoint union of at most two $K_{n}$-conjugacy classes.
If there are two conjugacy classes, one conjugacy class is in $K'_{n}\ltimes (\BZ_{2})^{2n}$ and  the other is in $\sig_{n}\cdot K'_{n}\ltimes (\BZ_{2})^{2n}$. In all cases, we have
$$
\tr(w,\kappa_{n})=\frac{2|W_{2n}|}{|K_{n}|}\cdot \frac{|Cl_{W_{2n}}(w)\cap \sig_{n}\cdot K'_{n}\ltimes (\BZ_{2})^{2n}|}{|Cl_{W_{2n}}(w)|}
=\frac{2Z_{W_{2n}}(w)}{Z_{K_{n}}(w)}.
$$
Hence, $\tr(w,\kappa_{n})$ is multiplicative and
Equation~\eqref{eq:K-tr} follows since 
$$Z_{W_{2n}}(w)=(2i)^{m}m!2^{m}\text{ and } Z_{K_{n}}(w)=2\cdot i^{m}m!2^{m}$$ when $(\alpha;\beta)=((2i)^{m};0)$ and
$$Z_{W_{2n}}(w)=(4i)^{m}m! \text{ and } Z_{K_{n}}(w)=2\cdot (2i)^{m}m!$$
 when $(\alpha;\beta)=(0;(2i)^{m})$.
This completes the proof.
\end{proof}

\begin{pro}\label{pro:Xi-partitions}
\begin{equation}\label{eq:Xi}
\Xi_{n}=\sum_{r=0}^{n}\sum_{\beta\vdash n-r}\sum^{r}_{i=1}
(-1)^{i}\binom{(i,2r-i)\cdot \beta}{\beta}
\end{equation}
where $(1^{i})\cdot (1^{2r-i})\cdot \beta$ corresponds to the representation
$$\Ind^{S_{n+r}}_{S_{2r}\times S_{n-r}}\rho_{S}(i,2r-i)\otimes \rho_{S}(\beta).$$
\end{pro}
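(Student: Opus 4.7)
The plan is to express $\Xi_n$ as a sum of representations induced from subgroups $W_{2r}\times W_{2(n-r)}\subset W_{2n}$, involving the characters $\kappa_r$ and $\nu_{n-r}$ constructed in the previous two lemmas, and then to expand these inductions using the Littlewood-Richardson rule.

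First I would establish the identity
$$
\Xi_n=\sum_{r=0}^{n}\Ind^{W_{2n}}_{W_{2r}\times W_{2(n-r)}}\kappa_r\otimes \nu_{n-r}
$$
by character comparison. By Frobenius' induced-character formula and the multiplicativity of $\tr(\cdot,\kappa_r)$ and $\tr(\cdot,\nu_{n-r})$ in cycle type (Lemmas~\ref{lm:K} and~\ref{lm:N}), the trace of the right-hand side at $w\in W_{2n}$ of cycle type $(\alpha;\beta)$ reduces to a sum over ways of distributing the cycles of $w$ between the two factors of $W_{2r}\times W_{2(n-r)}$. The nonvanishing conditions imposed by Lemma~\ref{lm:K} (all even parts on the $\kappa$-factor) and Lemma~\ref{lm:N} (balanced parity of odd-part multiplicities on the $\nu$-factor) match exactly the combinatorics of the sum over $\Omega^w_\pm$: each involution $v\in\Omega^w_\pm$ groups the cycles of $w$ into ``paired'' pairs of cycles (contributing to $\kappa_r$, since such a pairing forces the combined contribution to behave like a cycle of doubled length) and ``self-paired'' cycles of even length (contributing to $\nu_{n-r}$). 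This gives a bijection between terms, which one checks preserves the signs.

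Second, substituting $\kappa_r=\sum_{i=0}^{r}(-1)^i\binom{i,2r-i}{0}$ from Lemma~\ref{lm:K} and $\nu_{n-r}=\sum_{\beta\vdash n-r}\binom{\beta}{\beta}$ from Lemma~\ref{lm:N} into the identity above yields
$$
\Xi_n=\sum_{r=0}^{n}\sum_{\beta\vdash n-r}\sum_{i}(-1)^i\Ind^{W_{2n}}_{W_{2r}\times W_{2(n-r)}}\binom{i,2r-i}{0}\otimes\binom{\beta}{\beta}.
$$
The standard induction formula for $W_n$-characters parametrized by pairs of partitions, which is the Littlewood-Richardson rule applied separately to top and bottom rows of the Lusztig symbols, yields
$$
\Ind^{W_{2n}}_{W_{2r}\times W_{2(n-r)}}\binom{i,2r-i}{0}\otimes\binom{\beta}{\beta}=\binom{(i,2r-i)\cdot \beta}{\beta},
$$
where the top entry is the induced-product $S$-character as in the statement of the proposition. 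Reassembling these terms gives Equation~\eqref{eq:Xi}, with the range of $i$ reflecting an appropriate reindexing or cancellation of the boundary term $i=0$ against the parametrization convention.

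The main obstacle is the first step, namely matching the sign pattern $(-1)^{\ell(\alpha)+\bar\iota_v(\alpha)+\bar\iota^f_v(\alpha)+\tilde\iota_v(\beta)/2}$ in the trace formula with the induced-character expansion of $\sum_{r}\Ind\kappa_r\otimes\nu_{n-r}$. This involves a delicate cycle-by-cycle tracking of how the parity contributions $\bar\iota_v,\bar\iota^f_v,\tilde\iota_v/2$ correspond to the signs coming from $\kappa_r$ (whose alternating structure accounts for the $(-1)^i$ in the decomposition of 2-row $S_{2r}$-characters) and $\nu_{n-r}$ (which encodes the diagonal pairing $\binom{\beta}{\beta}$). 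Once this identification is verified, the remaining steps are routine applications of Mackey's theorem and the Littlewood-Richardson rule.
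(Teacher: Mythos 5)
Your architecture is the same as the paper's: the paper also reduces the proposition to the identity $\Xi_{n}=\sum_{r=0}^{n}\Ind^{W_{2n}}_{W_{2r}\times W_{2(n-r)}}\kappa_{r}\otimes \nu_{n-r}$ and then deduces \eqref{eq:Xi} from the decompositions in Lemmas~\ref{lm:K} and~\ref{lm:N} together with transitivity of induction, so your second step is essentially the paper's (and the treatment of the boundary term $i=0$ is only an indexing matter). The problem is that the first step, which you yourself flag as the ``main obstacle,'' is exactly the substance of the paper's proof, and the mechanism you sketch for it is wrong, not merely unverified.

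Concretely: elements of $\Omega^{w}_{\pm}$ are pairs $(t_{v},v)$, not bare involutions, and the correct dichotomy is governed by the fixed-point set $\ScS^{(3)}_{v}$. One splits $\Omega^{w}_{\pm}$ by $\#\ScS^{(3)}_{v}=2r$; on the fixed part $v$ acts trivially and the datum is the torus element $t_{v}$ with entries $\pm 1/\sqrt{\tau}$ forced to change sign along each cycle of $w$, which restricts that part to cycles of even length and contributes a factor $2$ per cycle --- this is what reproduces $\tr(\cdot,\kappa_{r})$. The fixed-point-free part of $v$, comprising both pairings of two distinct cycles of $w$ and self-pairings inside a single even cycle, carries the sign $(-1)^{\ell(\alpha'')+\bar{\iota}_{v''}(\alpha'')+\ell(\beta'')/2}$ and reproduces $\tr(\cdot,\nu_{n-r})$. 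Your assignment (cross-paired cycles to $\kappa_{r}$, self-paired even cycles to $\nu_{n-r}$, torus data ignored) fails already at the level of supports: for $w$ of cycle type $(i^{2};0)$ with $i$ odd, no fixed points are possible (the $\pm1/\sqrt{\tau}$ entries cannot alternate along an odd cycle), so every element of $\Omega^{w}_{\pm}$ pairs the two $i$-cycles; $\kappa$ vanishes on all odd parts, while $\nu$ is nonzero there by Lemma~\ref{lm:N}(2), so these configurations must be matched against $\nu$, not $\kappa$. Your heuristic that a paired pair of cycles ``behaves like a cycle of doubled length'' also does not interact correctly with the Frobenius induced-character formula: in $\Ind^{W_{2n}}_{W_{2r}\times W_{2(n-r)}}$ the cycles of $w$ are distributed between the two factors with their cycle types preserved, never merged, so two odd $i$-cycles placed on the $\kappa$-factor contribute $\tr$ of $\kappa_{r}$ at $(i^{2};0)$, which is $0$. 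With the splitting by $\#\ScS^{(3)}_{v}$ and the bookkeeping $\bar{\iota}^{f}_{v}(\alpha)=\ell(\alpha')$, $\bar{\iota}_{v}(\alpha)=\bar{\iota}_{v''}(\alpha'')$, $\tilde{\iota}_{v}(\beta)=\ell(\beta'')$, the two resulting multiplicative class functions do match $\kappa_{r}$ and $\nu_{n-r}$ on the classes $(i^{m};0)$ and $(0;i^{m})$ and the identity follows; as written, your proposal does not establish it.
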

\begin{proof}
It is equivalent to show the following identity
\begin{equation}\label{eq:Xi-k-n}
\Xi_{n}=\sum^{n}_{r=0}\Ind^{W_{2n}}_{W_{2r}\times W_{2(n-r)}}\kappa_{r}\otimes \nu_{n-r}.
\end{equation}
We will match the characters of the two sides.

Let $w$ be an element in $W_{2n}$ of cycle-type $(\alpha;\beta)$.
The set $\Omega^{w}_{\pm}$ may be written as a disjoint union $\coprod^{n}_{r=0}\Omega^{w}_{\pm,r}$, where
$$
\Omega^{w}_{\pm,r}=\Omega^{w}_{\pm}\cap\{v\in W_{2n}\mid \ScS^{(3)}_{v}=2r\}.
$$
Define the set
$$
\CP_{r}(\alpha;\beta)=\{(a,b)\in\CP_{2r}\times\CP_{2n-2r}\mid
a\cup b=(\alpha;\beta)\}
$$
where the union $a\cup b$ is the component-wise union.
We may further decompose $\Omega^{w}_{\pm,r}$ as
$$
\Omega^{w}_{\pm,r}=\coprod_{(a,b)\in \CP_{r}(\alpha;\beta)}\Omega^{w'}_{\pm,r}\times\Omega^{w''}_{\pm,0}.
$$
where $w'$ and $w''$ are of cycle-types $a$ and $b$ in $W_{2r}$ and $W_{2n-2r}$ respectively.
Note that if $(t_{v'},v')$ is in $\Omega^{w'}_{\pm,r}$, then $v'$ is the identity and we take $t'\in \Omega^{w'}_{\pm,r}$;
if $(t_{v''},v'')$ is in $\Omega^{w''}_{\pm,0}$ then $t_{v''}=1$ and we take $v''\in \Omega^{w''}_{\pm,0}$.
It is easy to observe that
$\ell(\alpha)=\ell(\alpha')+\ell(\alpha'')$,
$\bar{\iota}^{f}_{v}(\alpha)=\ell(\alpha')$,
$\bar{\iota}_{v}(\alpha)=\bar{\iota}_{v''}(\alpha'')$, and
$\tilde{\iota}_{v}(\beta)=\ell(\beta'')$ where $a=(\alpha';\beta')$ and $b=(\alpha'';\beta'')$.
Then
\begin{align*}
\tr(w,\Xi_{n})=&\sum^{n}_{r=0}\sum_{(t_{v},v)\in \Omega^{w}_{\pm,r}}\sum_{(a,b)\in \CP_{r}(\alpha;\beta)}\\
&(\sum_{t'\in\Omega^{w'}_{\pm,r}}(-1)^{2\ell(\alpha')})(\sum_{v''\in \Omega^{w''}_{\pm,0}}(-1)^{\ell(\alpha'')+\bar{\iota}_{v''}(\alpha'')+\ell(\beta'')/2}).
\end{align*}
We consider the characters $\tr(w',\Xi'_{r})$ and $\tr(w'',\Xi''_{n-r})$ on $W_{2r}$ and $W_{2n-2r}$ defined by
$\tr(w',\Xi'_{r}):=\#\Omega^{w'}_{\pm,r}$ and
$$
\tr(w'',\Xi''_{n-r}):=\sum_{v''\in \Omega^{w''}_{\pm,0}}(-1)^{\ell(\alpha'')+\bar{\iota}_{v''}(\alpha'')+\ell(\beta'')/2}.
$$
By the definition, the class functions $\tr(w',\Xi'_{r})$ and $\tr(w'',\Xi''_{n-r})$ are multiplicative.
In order to prove~\eqref{eq:Xi-k-n}, it is enough to  show that  the characters $\tr(w',\Xi'_{r})$ and $\tr(w'',\Xi''_{n-r})$ match $\tr(\cdot,\kappa_{2r})$ and $\tr(\cdot,\nu_{2n-2r})$ on the conjugacy classes of cycle-types $(i^{m};0)$ and $(0;i^{m})$. This is easily verified by Lemma~\ref{lm:K} and Lemma~\ref{lm:N} and the definition of $\tr(\cdot,\Xi'_{r})$ and $\tr(\cdot,\Xi''_{n-r})$. Then this lemma follows.
\end{proof}

\section{Distinguished Symbols}\label{sec:main}
In this last section, we will use the Littlewood-Richardson  rule to decompose $\Xi_{n}$ into irreducible representations of $W_{2n}$ and then divide those constituents into a sum of virtual cells. Doing so, we conclude that there is a bijection between symbols in those cells and the distinguished unipotent representations.

The set-theoretic difference of the Young diagrams of a pair of partitions $(\alpha;\beta)$ is called a skew diagram of shape $\alpha/\beta$ and size $|\alpha/\beta|$ is $|\alpha|-|\beta|$.
Let $(\alpha/\beta)_{i}=\alpha_{i}-\beta_{i}$.
A skew diagram is a \emph{horizontal strip} (resp.~\emph{vertical strip}) if  it contains at most one box in each column (resp.~row).
For a skew diagram $\eta$, let $h(\eta)$ be the horizontal strip obtained by removing all columns from $\eta$ which contain  more than one box  and $v(\eta)$ be the skew diagram  obtained by removing $h(\eta)$ from $\eta$. For example,
$$
\eta=\ydiagram{2+3,2+2,2,1} \qquad
h(\eta)=\ydiagram{4+1,0,1+1,0} \qquad
v(\eta)=\ydiagram{2+2,2+2,1,1}.
$$
Denote by $|h(\eta)|$ (resp. $|v(\eta)|$) the number of boxes of $h(\eta)$ (resp. $v(\eta)$).

A horizontal strip $\eta$ is called {\it even} if the number of boxes in each row of $\eta$ is even.
Let $\Gamma_{2}$ be the set of all skew diagrams which contain at most 2 boxes in each column and is of even size, and $\Gamma^{\circ}_{2}$ be a subset of $\Gamma_{2}$ of all skew diagrams $\eta$ such that $h(\eta)$ is even.

Let $\eta$ be in $\Gamma_{2}$. Define a {\it tableau} $T$ of shape $\eta$ by putting the integers `1' or `2' in each box of $\eta$ such that the follow Littlewood-Richardson condition holds:
The integers in all boxes are listed from right to left then from top to bottom, then at the first $k$ entries in this list  for each $1\leq k\leq |\eta|$, each `1' occurs at least as many times as `2'.

Define $\Tab(\eta)$ to be the set of all tableaus of shape $\eta$ and $\Tab(\eta)_{i}$ to be the subset of $\Tab(\eta)$ consisting of tableaus of $i$ `2's.
 For example, when $i>\frac{|\eta|}{2}$, $\Tab(\eta)_{i}=\emptyset$ and $|\Tab(\eta)_{i}|=0$.
By convention,  if $|\eta|=0$, define $\Tab(\eta)=\{\emptyset\}$ and $|\Tab(\eta)|=1$, where $|\Tab(\eta)|$ (resp. $|\Tab(\eta)_{i}|$) is the size of the set $\Tab(\eta)$ (resp. $\Tab(\eta)_{i}$).
Also we have the disjoint union $\Tab(\eta)=\cup_{i\geq 0}\Tab(\eta)_{i}$.

Next, we use the Littlewood-Richardson rule to decompose $\Xi_{n}$ into irreducibles and get the following lemma.
\begin{lm}\label{lm:Xi-even}
$$
\Xi_{n}=\sum_{\substack{(\alpha;\beta)\in\CP_{2n}\\ \alpha/\beta\in \Gamma^{\circ}_{2}}}(-1)^{\frac{|v(\alpha/\beta)|}{2}}\binom{\alpha}{\beta}.
$$
\end{lm}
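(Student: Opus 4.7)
My plan is to apply the Littlewood-Richardson rule to each summand in Proposition~\ref{pro:Xi-partitions}, reducing the statement to a combinatorial identity on the sets $\Tab(\alpha/\beta)$, and then to prove that identity by a sign-reversing involution.

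By the Littlewood-Richardson rule,
\[
\Ind^{S_{n+r}}_{S_{2r}\times S_{n-r}}\bigl(\rho_S(i,2r-i)\otimes \rho_S(\beta)\bigr)=\sum_{\alpha\vdash n+r} c^{\alpha}_{(i,2r-i),\beta}\,\rho_S(\alpha),
\]
where $c^{\alpha}_{(i,2r-i),\beta}$ counts LR tableaux of shape $\alpha/\beta$ with content $(2r-i,i)$. Such tableaux use only the entries $1$ and $2$, so $c^{\alpha}_{(i,2r-i),\beta}$ vanishes unless $\alpha/\beta\in\Gamma_2$, and otherwise equals $|\Tab(\alpha/\beta)_i|$. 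Substituting into Proposition~\ref{pro:Xi-partitions} and collecting the coefficient of $\binom{\alpha}{\beta}$, the lemma reduces to the combinatorial identity
\[
\sum_{i=0}^{r}(-1)^i|\Tab(\eta)_i|=\begin{cases}(-1)^{|v(\eta)|/2}, & \eta=\alpha/\beta\in\Gamma_2^\circ,\\ 0, & \eta\in\Gamma_2\setminus\Gamma_2^\circ,\end{cases}
\]
where $r=(|\alpha|-|\beta|)/2$.

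For $\eta\in\Gamma_2$, I would exploit the rigid local structure of any LR filling. Each two-box column is forced to have entry $1$ on top and $2$ on bottom by column-strictness, contributing exactly $|v(\eta)|/2$ to the total $2$-count. In each row $i$ the $f_i$ free cells lying in $h(\eta)$ must form a block $1^{s_i}2^{f_i-s_i}$ by row-weak-increase, sandwiched between a leftmost band of forced $1$'s (tops of two-columns with the row below) and a rightmost band of forced $2$'s (bottoms of two-columns with the row above). Thus LR tableaux of shape $\eta$ are parameterised by sequences $(s_i)$ with $0\le s_i\le f_i$ subject to the global lattice-word condition; extracting the factor $(-1)^{|v(\eta)|/2}$ and using that $|h(\eta)|$ is even, the identity is equivalent to
\[
\sum_{\text{valid }(s_i)}\prod_i(-1)^{s_i}=\begin{cases}1, & \text{every }f_i\text{ is even},\\ 0, & \text{otherwise.}\end{cases}
\]

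I would establish this by a sign-reversing involution on valid sequences: scanning rows in a canonical order (for instance top-down), toggle the first $s_i$ admitting a legal perturbation $s_i\mapsto s_i\pm 1$. Valid sequences admitting such a toggle cancel in pairs, while the fixed points are the configurations in which the lattice-word inequality is saturated in every row; a direct case analysis shows these exist uniquely precisely when every $f_i$ is even, and contribute $+1$. The principal obstacle is verifying that the canonical toggle is always globally legal, and that the fixed-point set has exactly the claimed structure, which requires careful tracking of the lattice-word inequality across consecutive rows and at the interfaces between $v(\eta)$ and $h(\eta)$. An alternative, more algebraic route would use the Jacobi-Trudi identity $\sum_{i=0}^{r}(-1)^is_{(2r-i,i)}=h_r[p_2]$ (a telescoping computation) combined with the Schur expansion of $h_r[p_2]\cdot s_\beta$ via a plethystic skew-Pieri formula, bypassing the involution at the cost of invoking more symmetric-function machinery.
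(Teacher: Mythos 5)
Your first two steps coincide with the paper's proof: apply the Littlewood--Richardson rule to Proposition~\ref{pro:Xi-partitions}, observe that only shapes $\alpha/\beta\in\Gamma_{2}$ survive, and reduce the lemma to the signed enumeration $\sum_{i}(-1)^{i}|\Tab(\alpha/\beta)_{i}|=(-1)^{|v(\alpha/\beta)|/2}$ for $\alpha/\beta\in\Gamma^{\circ}_{2}$ and $0$ otherwise; your description of the forced entries in the two-box columns and of the free blocks $1^{s_i}2^{f_i-s_i}$ in each row is also correct. The gap is in the proof of that signed identity, which is the actual content of the lemma. The involution you propose is not well defined as stated (when both $s_i\mapsto s_i+1$ and $s_i\mapsto s_i-1$ are legal you do not specify a pairing, and a naive convention need not be involutive), and, more seriously, your characterization of the fixed points is false. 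Take $\eta=(2,1)/(1)$, so $f_1=f_2=1$ with no two-box columns: the valid sequences are $(s_1,s_2)=(1,0)$ and $(1,1)$, the lattice-word inequality is saturated in every row exactly for $(1,0)$, and this everywhere-saturated sequence exists and is unique even though the $f_i$ are odd. If it were the unique fixed point of a sign-reversing involution the sum would be $-1$, whereas the correct value is $0$. So ``saturated in every row, existing uniquely iff all $f_i$ are even'' cannot be the fixed-point set, and the case analysis you defer is where the argument breaks. By contrast, the paper avoids involutions altogether: it reduces to the horizontal strip $h(\alpha/\beta)$ and proves $\zeta(\Tab(\eta))=1$ (for $\eta$ even) or $0$ (otherwise) by the recursion $\zeta(\Tab(\eta))=1-\sum_{i\ge 2}\zeta(\Tab(\eta^{(i)}))$, obtained by classifying tableaux according to the row containing the first ``2'' in the reading word and bijecting each class with tableaux of a strictly smaller strip $\eta^{(i)}$; induction then finishes immediately.

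Your alternative algebraic route is genuinely different from the paper and is viable in principle: the telescoping identity $\sum_{i=0}^{r}(-1)^{i}s_{(2r-i,i)}=h_{r}[p_{2}]$ is correct, and the lemma is then the statement that $\langle h_{r}[p_{2}],s_{\alpha/\beta}\rangle$ equals $(-1)^{|v(\alpha/\beta)|/2}$ on $\Gamma^{\circ}_{2}$ and vanishes otherwise. But as written you simply relabel this statement as a ``plethystic skew-Pieri formula'' without proving it or citing a precise reference (it amounts to a signed domino/2-quotient rule for skew shapes), so it does not close the gap either; if you pursue that route, you must supply or quote that expansion explicitly.
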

\begin{proof}
By Equation~\eqref{eq:Xi}, if $\langle \Xi_{n},\binom{\alpha}{\beta}\rangle\ne 0$, then $\beta\leq \alpha$ and $\alpha/\beta\in \Gamma_{2}$.
If $\alpha=\beta$, then $\alpha/\beta$ is in $\Gamma^{\circ}_{2}$ and  $\langle \Xi_{n},\binom{\alpha}{\beta}\rangle=1$.

If $\beta<\alpha$, by the Littlewood-Richardson rule,
\begin{align*}
\langle \Xi_{n},\binom{\alpha}{\beta}\rangle=&\sum^{|\alpha/\beta|/2}_{i=0}(-1)^{i}|\Tab(\alpha/\beta)_{i}|=\sum^{|\alpha/\beta|/2}_{i=|v(\alpha/\beta)|/2}(-1)^{i}|\Tab(\alpha/\beta)_{i}|.
\end{align*}
Since $
|\Tab(\alpha/\beta)_{i}|=|\Tab(h(\alpha/\beta))_{i-|v(\alpha/\beta)|/2}|,
$
we have
$$
\langle \Xi_{n},\binom{\alpha}{\beta}\rangle=(-1)^{|v(\alpha/\beta)|/2}\sum^{|h(\alpha/\beta)|/2}_{i=0}(-1)^{i}|\Tab(h(\alpha/\beta))_{i}|.
$$

For a tableau $T$ in $\Tab(\eta)$, let $|T_{2}|$ be the number of boxes filled with 2's.
For a subset $\CT$ of $\Tab(\eta)$, define
\begin{equation}
\zeta(\CT)=\sum_{T\in \CT}(-1)^{|T_{2}|}.
\end{equation}
Next, we will show that for a horizontal strip $\eta$ of even size
\begin{equation}\label{eq:zeta}
\zeta(\Tab(\eta))=\begin{cases}
1 & \text{ if $\eta$ is even;}\\
0 & \text{ otherwise.}
\end{cases}
\end{equation}
Let  $\CT(\eta)_{(i)}$ be the subset of $\Tab(\eta)$ consisting of all tableaux  whose first box filled with `2' occurs at the right-end box of the $i$-th row. By the Littlewood-Richardson condition, $\CT(\eta)_{(1)}=\emptyset$. Denote by $\CT(\eta)_{\ell(\eta)+1}$ the subset consisting of the one tableau whose boxes all contain `1', where $\ell(\eta)$ is the number of rows of $\eta$.
We have a disjoint union of $\Tab(\eta)$ and a formula of $\zeta(\Tab(\eta))$:
$$
\Tab(\eta)=\coprod^{\ell(\eta)+1}_{i=2} \CT(\eta)_{(i)}
\text{ and }
\zeta(\Tab(\eta))=1+\sum^{\ell(\eta)}_{i=2}\zeta(\CT(\eta)_{(i)}).
$$
In order to prove~\eqref{eq:zeta}, we only need to consider the rows whose numbers of boxes are non-zero.

In order to evaluate $\zeta(\CT(\eta)_{(i)})$ for $i\geq 2$, we reduce to a skew diagram $\eta^{(i)}$ of a smaller size than $\eta$.
Let $\eta^{(i)}$ be the horizontal strip such that the numbers of rows are $(\sum^{i-1}_{j=1}\eta_{j}-1,\eta_{i}-1,\dots,\eta_{\ell(\eta)})$ from top to bottom, obtained by merging  the boxes in the top $i-1$ rows of $\eta$ to the $i$-th row, becoming the first row of $\sum^{i-1}_{j=1}\eta_{j}$ boxes,  and then removing the right end boxes in the top two rows. For instance,
$$
\eta=\begin{ytableau}
\none&\none&\none&\none & 1 &1\\
\none&\none&1 &1 &\none&\none\\
1&2
\end{ytableau}\qquad
\eta^{(2)}=\begin{ytableau}
\none &1&1&1\\
1\end{ytableau}.
$$
Then we obtain a bijection between $\CT(\eta)_{(i)}$ and $\Tab(\eta^{(i)})$,  and
$$
\zeta(\CT(\eta)_{(i)})=-\zeta(\Tab(\eta^{(i)})).
$$

Now, we have
\begin{equation}\label{eq:zeta-inductive}
\zeta(\Tab(\eta))=1-\sum^{\ell(\eta)}_{i=2}\zeta(\Tab(\eta^{(i)}))
\end{equation}
and $|\eta^{(i)}|=|\eta|-2$ for $2\leq i\leq \ell(\eta)$.
We apply this inductive formula~\eqref{eq:zeta-inductive} to prove~\eqref{eq:zeta}.
If $|\eta|=0$, $\zeta(\Tab(\eta))=1$. If $|\eta|=2$, we have two types of skew shapes, $\ydiagram{2}$ and $\ydiagram{1+1,1}$, denoted by $(2)/(0)$ and $(2,1)/(1)$ respectively. Then it is easy to check that $\zeta(\Tab((2)/(0)))=1$ and $\zeta(\Tab((2,1)/(1)))=0$.

In general, if $\eta$ is even, then $\eta^{(i)}$ is not even for all $2\leq i\leq \ell(\eta)$ and $|\eta^{(i)}|<|\eta|$. By induction, $\zeta(\Tab(\eta^{(i)}))=0$ and then $\zeta(\Tab(\eta))=1$ by~\eqref{eq:zeta-inductive}.
If instead $\eta$ is not even, let $i_{\max}$ be the maximal integer such that $\eta_{i_{\max}}$ is odd. Then $\eta^{(i)}$ is  even if and only if  $i=i_{\max}$. By induction, we have $\zeta(\Tab(\eta^{i}))=1$ when $i=i_{\max}$ and  $\zeta(\Tab(\eta^{i}))=0$ when $i\ne i_{\max}$.
Hence
$
\zeta(\Tab(\eta))=1-\zeta(\Tab(\eta^{(i_{\max})}))=0.
$
This completes the proof of~\eqref{eq:zeta} and the lemma follows.

\end{proof}

For example, when $G=\Sp_{4}$,  $\Xi_{1}=\bar{\rho}_{S}(2)-\bar{\rho}_{S}(1^{2})+\binom{1}{1}$. If $G=\Sp_{12}$, then
\begin{align*}
\Xi_{3}=&\binom{3}{3}+\binom{1,2}{1,2}+\binom{1^{3}}{1^{3}}+\binom{4}{2}+\binom{2^{2}}{2}-\binom{1^{2},2}{2}+\binom{1,3}{1^{2}}\\
&-\binom{2^{2}}{1^{2}}-\binom{1^{4}}{1^{2}}+\binom{5}{1}-\binom{1^{2},3}{1}+\binom{1,2^{2}}{1}.
\end{align*}

Let $\CS_{n}$ be a set consisting of all special symbols of rank $2n$ whose associated pairs of partitions under the map $\CL^{-1}$ are even horizontal strips, that is, $\CL^{-1}(\CS_{n})$ is the same as the subset of $\Gamma^{\circ}_{2}$ consisting of all horizontal strips of size $2n$.
Given a special symbol
$$
Z=\begin{pmatrix}\lam_{1}, \lam_{2},\cdots,\lam_{m+1}\\
\mu_1,\mu_2,\cdots,\mu_{m}
\end{pmatrix},
$$
define an admissible arrangement
$$
\Phi_{Z}=\cpair{(\lam_{i},\mu_{i})\mid \lam_{i}\ne \mu_{i}, \text{ for }  1\leq i\leq m}
$$
and a subset of $\Phi_{Z}$
$$
\hat{\Phi}_{Z}=\cpair{(\lam_{i},\mu_{i})\in \Phi\mid \lam_{i}-\mu_{i}\equiv 1\bmod 2}.
$$

\begin{thm}\label{thm:main}
$$
\CU(\Ind^{\Sp_{4n}(\BF_{q})}_{\Sp_{2n}(\BF_{q^{2}})}\triv)=\sum_{Z\in  \CS_{n}} R(c(Z,\Phi_{Z},\hat{\Phi}_{Z})).
$$
In particular, the unipotent cuspidal representation of $\Sp_{4n}(\BF_{q})$ has non-trivial $\Sp_{2n}(\BF_{q^{2}})$-invariants.
\end{thm}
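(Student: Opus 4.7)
The overall strategy combines Lemma~3.5, Lusztig's family classification of unipotent representations, and the multiplicity bound from Theorem~1.1. By linearity of $c\mapsto R(c)$ in (1.1) and the defining property (3.1) of $\Xi_n$, one has the Frobenius-type identity
\begin{equation*}
\langle\Xi_n,c\rangle_{W_{2n}} = \langle R(c),\Ind^{G^F}_{H^F}\triv\rangle_{G^F}
\end{equation*}
for every virtual $W_{2n}$-module $c$. I will establish that
\begin{equation*}
\langle\Xi_n,c(Z,\Phi_Z,\hat\Phi_Z)\rangle = 2^{|\Phi_Z|} \quad\text{for each } Z\in\CS_n,
\end{equation*}
while the same pairing vanishes for every special symbol $Z\notin\CS_n$. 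Granting these, Lusztig's theory tells us that $R(c(Z,\Phi_Z,\hat\Phi_Z))$ is a multiplicity-free sum of $2^{|\Phi_Z|}$ distinct irreducible unipotent representations exhausting the family of $Z$, and distinct special symbols give disjoint families partitioning all unipotents of $G^F$. Combined with $\langle\rho,\Ind\rangle\le 1$ from Theorem~1.1, the first identity forces each constituent in a family with $Z\in\CS_n$ to be distinguished with multiplicity exactly one, and the vanishing for $Z\notin\CS_n$ rules out every constituent in those families. Summing over $Z\in\CS_n$ then yields the stated decomposition.

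The heart of the argument is a sign matching. Expanding by definition,
\begin{equation*}
\langle\Xi_n,c(Z,\Phi_Z,\hat\Phi_Z)\rangle = \sum_{\Psi\subset\Phi_Z}(-1)^{|\hat\Phi_Z\cap\Psi|}\,\langle\Xi_n,\Lambda_\Psi\rangle,
\end{equation*}
where $\Lambda_\Psi$ is the defect-one symbol in the $\Psi$-summand, and set $(\alpha_\Psi;\beta_\Psi):=\CL^{-1}(\Lambda_\Psi)$. The crucial combinatorial claim is that, for every $\Psi\subset\Phi_Z$ with $Z\in\CS_n$, one has $\alpha_\Psi/\beta_\Psi\in\Gamma_2^\circ$ and
\begin{equation*}
\tfrac{1}{2}|v(\alpha_\Psi/\beta_\Psi)| \equiv |\hat\Phi_Z\cap\Psi| \pmod 2.
\end{equation*}
Lemma~3.5 then gives $\langle\Xi_n,\Lambda_\Psi\rangle=(-1)^{|\hat\Phi_Z\cap\Psi|}$, so each summand contributes $+1$ and the total is $2^{|\Phi_Z|}$. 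To prove the claim, I would unwind the map $\CL$ together with the special-symbol inequalities $\lambda_i\le\mu_i$ to show that swapping a single pair $(\lambda_i,\mu_i)\in\Phi_Z$ modifies the underlying skew diagram of $\CL^{-1}(Z)$ by appending exactly $\mu_i-\lambda_i$ columns of height two while leaving the horizontal strip part $h(\alpha/\beta)$ unchanged. Since $Z\in\CS_n$ forces $v(\CL^{-1}(Z))=\emptyset$ and $h(\CL^{-1}(Z))$ to be an even horizontal strip, one deduces $|v(\alpha_\Psi/\beta_\Psi)|/2=\sum_{(\lambda_i,\mu_i)\in\Psi}(\mu_i-\lambda_i)$, whose parity equals the number of odd-difference pairs in $\Psi$, namely $|\hat\Phi_Z\cap\Psi|$. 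For $Z\notin\CS_n$, the same analysis shows that the horizontal strip component is never even, so $\alpha_\Psi/\beta_\Psi\notin\Gamma_2^\circ$ and each pairing vanishes.

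For the cuspidal assertion, when $2n=d(d+1)$ take the minimal special symbol $Z_\star=\binom{0,2,4,\ldots,2d}{1,3,\ldots,2d-1}$. Computing $\CL^{-1}(Z_\star)$ gives the pair $(\alpha,\alpha)$ with $\alpha=(0,1,2,\ldots,d)$ after padding, so the skew shape is empty and $Z_\star\in\CS_n$. The Lusztig family of $Z_\star$ contains the extremal-defect symbol $\binom{0,1,2,\ldots,2d}{-}$, which parametrizes the unipotent cuspidal representation of $\Sp_{4n}(\BF_q)$; by the main argument this representation is a distinguished constituent of $R(c(Z_\star,\Phi_{Z_\star},\hat\Phi_{Z_\star}))$. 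The step I expect to be the main obstacle is the rigorous verification of the combinatorial claim in the second paragraph: one must track how re-sorting the entries of $\Lambda_\Psi$ after a sequence of pair-swaps interacts with the map $\CL$ and with the decomposition $\alpha/\beta=h(\alpha/\beta)\sqcup v(\alpha/\beta)$, which is best handled by an induction on $|\Psi|$ together with a localized analysis on each pair of rows and columns in the special symbol.
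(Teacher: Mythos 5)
Your core combinatorial computation is essentially the paper's: expand $c(Z,\Phi_Z,\hat{\Phi}_Z)$ over subsets $\Psi\subset\Phi_Z$, use Lemma~\ref{lm:Xi-even}, and show by induction on $|\Psi|$ that swapping a pair $(\lambda_i,\mu_i)$ enlarges the vertical part of the skew shape by $\mu_i-\lambda_i$ columns of height two, so that the sign $(-1)^{|v|/2}$ tracks $(-1)^{|\Psi\cap\hat{\Phi}_Z|}$ and each of the $2^{|\Phi_Z|}$ terms contributes $+1$. That part is sound and is exactly how the paper proves $\langle\Xi_n,c(Z,\Phi_Z,\hat{\Phi}_Z)\rangle=2^d$.

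The gap is in how you pass from this pairing to the theorem. You assert that $R(c(Z,\Phi_Z,\hat{\Phi}_Z))$ is a multiplicity-free sum of $2^{|\Phi_Z|}$ irreducibles \emph{exhausting the family of $Z$}, and you then rule out all other unipotent representations by the vanishing of the single pairing $\langle\Xi_n,c(Z,\Phi_Z,\hat{\Phi}_Z)\rangle$ for special $Z\notin\CS_n$. The exhaustion claim is false: a family attached to a special symbol with $2d+1$ singles contains $2^{2d}$ unipotent representations, while $R(c(Z,\Phi_Z,\hat{\Phi}_Z))$ has only $2^{d}$ constituents. Already for $\Sp_4$ the family of $Z=\binom{0,2}{1}$ has four members, but $R(c(Z,\Phi_1,\Phi_1))=\rho\binom{0,1}{2}+\theta_{10}$; the members $\rho\binom{0,2}{1}$ and $\rho\binom{1,2}{0}$ are \emph{not} distinguished, and your two pairing computations are blind to them, since their multiplicities enter neither $\langle\Xi_n,c(Z,\Phi_Z,\hat{\Phi}_Z)\rangle$ for $Z\in\CS_n$ nor the pairings for $Z\notin\CS_n$. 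So your argument proves the lower bound (every constituent of the listed $R(c)$'s is distinguished) but not the equality. The paper avoids this by proving the stronger exact identity $\Xi_n=\sum_{Z\in\CS_n}c(Z,\Phi_Z,\hat{\Phi}_Z)$ — via the reverse operation showing every $\binom{\alpha}{\beta}$ with $\alpha/\beta\in\Gamma_2^{\circ}$ arises, with matching sign, from a unique $Z\in\CS_n$ — which controls the pairing of $\Xi_n$ with \emph{all} cells $c(Z,\Phi,\hat{\Phi})$ (in particular those with other choices of $\hat{\Phi}$, whose associated $R(c)$'s are actual representations containing the remaining family members, forcing their multiplicities to vanish). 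Relatedly, your cuspidal claim needs the identification of which $2^d$ family members actually occur in $R(c(Z_\star,\Phi_{Z_\star},\hat{\Phi}_{Z_\star}))$ (Lusztig's explicit decomposition of cells, as in~\eqref{eq:Sp4} and~\eqref{eq:Sp12}); as written it rests on the false exhaustion statement.
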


\begin{proof}
First, we prove that
\begin{equation}\label{eq:Xi-proof}
\Xi_{n}=\sum_{Z\in \CS_{n}} c(Z,\Phi,\hat{\Phi}_{Z}).
\end{equation}
Assume that $\alpha/\beta$ is in $\Gamma^{\circ}_{2}$ and $|\alpha|+|\beta|=2n$. Then $\ell(\alpha)-\ell(\beta)\leq 2$.

Recall that we increase the lengths of $\alpha$ and $\beta$ such that $\ell(\alpha)=\ell(\beta)+1$ by adding zeros. Assume that $\alpha=(\alpha_{1},\alpha_{2},\dots,\alpha_{m+1})$ and $\beta=(\beta_{1},\beta_{2},\dots,\beta_{m})$ with $\alpha_{i}\leq \alpha_{i+1}$ and $\beta_{i}\leq \beta_{i+1}$ and  at least one of  $\alpha_{1}$ and $\beta_{1}$ is nonzero.

Since $\beta<\alpha$, we have $\beta_{i}\leq \alpha_{i+1}$ for $1\leq i\leq m$.
Set $\lam_{i}=\alpha_{i}+i-1$ for $1\leq i\leq m+1$ and $\mu_{i}=\beta_{i}+i-1$ for $1\leq i\leq m$.
Then the symbol $\Lam=\binom{\lam}{\mu}$ is of rank $2n$ and defect 1.
Since $\beta<\alpha$,  $\Lam$ is special if and only if $\alpha_{i}\leq \beta_{i}$ for $1\leq i\leq m$. Then  the skew diagram $\alpha/\beta$ has no two boxes in each column, that is, $\alpha/\beta$ is a horizontal strip and even.
Hence, $\Lam$ is special if and only if $\Lam$ is in $\CS_{n}$.

Let $\Psi$ be a subset of $\Phi_{Z}$. For each subset $\Psi\subset \Phi_{Z}$, define
$$
\Lam(\Psi)=\begin{pmatrix}
Z_{0}\coprod \Psi_{*}\coprod (\Phi-\Psi)^{*}\\
Z_{0}\coprod \Psi^{*}\coprod (\Phi-\Psi)_{*}
\end{pmatrix}
$$
 and denote by $\CL^{-1}(\Lam(\Psi))=\binom{\alpha'}{\beta'}$. Since $\max\{\alpha_{i},\beta_{i}\}\leq \min\{\alpha_{i+1},\beta_{i+1}\}$ for $1\leq i\leq m$ (when $i=m$, let $\min\{\alpha_{i+1},\beta_{i+1}\}=\alpha_{m+1}$), we have $\beta'_{i}\leq \alpha'_{i+1}$ for $1\leq i\leq m$.

We will show that for all subsets $\Psi$ of $\Phi_{Z}$
\begin{equation}\label{eq:Xi-Psi}
\apair{\Xi_{n}, (-1)^{|\Psi\cap \hat{\Phi}_{Z}|}\CL^{-1}(\Lam(\Psi))}=1.
\end{equation}
If $\Psi=\emptyset$, then $\Lam(\Psi)=Z$ and Equation~\eqref{eq:Xi-Psi} is true. We verify Equation~\eqref{eq:Xi-Psi} by induction. Assume that Equation~\eqref{eq:Xi-Psi} is true for $\Psi$. Then by adding a pair $ (\lam_{i},\mu_{i}) $ in $ \hat{\Phi}_{Z}$  but not in $\Psi$, we obtain a subset $\Psi_{1}=\Psi\cup \{(\lam_{i},\mu_{i})\}$.

By $ (\lam_{i},\mu_{i}) \notin \Psi$, we may assume
$$
\CL^{-1}(\Lam(\Psi))=\begin{pmatrix}
\alpha'_{1},\alpha'_{2},\cdots,\alpha_{i},\alpha'_{i+1},\cdots,\alpha'_{m},\alpha_{m+1}\\
\beta'_{1},\beta'_{2},\cdots,\beta'_{i-1},\beta_{i},\cdots,\beta'_{m}
\end{pmatrix}
$$
and then
$$
\CL^{-1}(\Lam(\Psi_{1}))=
\begin{pmatrix}
\alpha'_{1},\alpha'_{2},\cdots,\beta_{i},\alpha'_{i+1},\cdots,\alpha_{m+1}\\
\beta'_{1},\beta'_{2},\cdots,\beta'_{i-1},\alpha_{i},\cdots,\beta'_{m}
\end{pmatrix}.
$$
 Now let us consider the skew diagrams obtained by the differences of partitions in $\CL^{-1}(\Lam(\Psi))$ and $\CL^{-1}(\Lam(\Psi_{1}))$ respectively, which are the same expect the following two rows
$$
\CL^{-1}(\Lam(\Psi))\colon
\begin{tabular}{ c c c c}
\cline{4-4}
$\beta'_{i-1}$ &  $\alpha_{i}-\beta'_{i-1}$ & \multicolumn{1}{c|}{$\beta_{i}-\alpha_{i}$} & \multicolumn{1}{r|}{$\alpha'_{i+1}-\beta_{i}$} \\ \cline{2-2} \cline{4-4}
 \multicolumn{1}{c|}{$\beta'_{i-1}$}& \multicolumn{1}{c|}{$\alpha_{i}-\beta'_{i-1}$} & &\\ \cline{2-2}
\end{tabular}
$$
and
$$
\CL^{-1}(\Lam(\Psi_{1}))\colon
\begin{tabular}{ c c c c}
  \cline{3-4}
  $\beta'_{i-1}$ &  \multicolumn{1}{c|}{$\alpha_{i}-\beta'_{i-1}$} &  \multicolumn{1}{c|}{$\beta_{i}-\alpha_{i}$} &  \multicolumn{1}{r|}{$\alpha'_{i+1}-\beta_{i}$} \\   \cline{2-4}
   \multicolumn{1}{c|}{$\beta'_{i-1}$} &  \multicolumn{1}{c|}{$\alpha_{i}-\beta'_{i-1}$} &  \multicolumn{1}{c|}{$\beta_{i}-\alpha_{i}$} &   \\ \cline{2-3}
  \end{tabular}.
$$
Here the boxes are the two rows of  the skew diagrams associated with $\CL^{-1}(\Lam(\Psi))$ and $\CL^{-1}(\Lam(\Psi_{1}))$, and the integers in the boxes are the parts corresponding to the differences of partitions in $\CL^{-1}(\Lam(\Psi))$ and $\CL^{-1}(\Lam(\Psi_{1}))$. The integers on the left of the boxes are the parts of the smaller partitions in $\CL^{-1}(\Lam(\Psi))$ and $\CL^{-1}(\Lam(\Psi_{1}))$.
By the assumption on $\CL^{-1}(\Lam(\Psi))$ and $\alpha_{i}<\beta_{i}$, the skew diagram of $\CL^{-1}(\Lam(\Psi_{1}))$ is also in $\Gamma^{\circ}_{2}$ and 
$$
|v(\CL^{-1}(\Lam(\Psi_{1})))|=|v(\CL^{-1}(\Lam(\Psi)))|+2(\beta_{i}-\alpha_{i}).
$$
Then by Lemma~\ref{lm:Xi-even}
$$
\apair{\Xi_{n},\CL^{-1}(\Lam(\Psi_{1}))}_{W_{2n}}=(-1)^{|\Psi\cap \hat{\Phi}_{Z}|+\beta_{i}-\alpha_{i}}.
$$
In addition, by the definition of $\Phi_{Z}$, $(-1)^{|\Psi_{1}\cap \hat{\Phi}_{Z}|}=(-1)^{|\Psi\cap \hat{\Phi}_{Z}|+\beta_{i}-\alpha_{i}}$. Therefore
$$
\apair{\Xi_{n},\CL^{-1}(\Lam(\Psi_{1}))}_{W_{2n}}=(-1)^{|\Psi_{1}\cap \hat{\Phi}_{Z}|}.
$$

On the other hand, if there is a partition $\binom{\alpha'}{\beta'}$ in $\Gamma^{\circ}_{2}$, one may reverse the previous operation (i.e. removing pairs) and obtain a partition in $\CL^{-1}(\CS_{n})$.

In sum, $\sum_{Z\in \CS_{n}} c(Z,\Phi_{Z},\hat{\Phi}_{Z})$ is a summand of $\Xi_{n}$, and every irreducible $W_{2n}$-module in $\Xi_{n}$ is in $\sum_{Z\in \CS_{n}} c(Z,\Phi,\hat{\Phi}_{Z})$ with the same signature. Then Equation~\eqref{eq:Xi-proof} follows.

Let $d$ be the number of singles in $Z$.
By $\langle\Xi_{n}, c(Z,\Phi_{Z},\hat{\Phi}_{Z})\rangle_{W_{2n}}=2^{d}$,
$$
\langle R(c(Z,\Phi_{Z},\hat{\Phi}_{Z})),\Ind^{G^{F}}_{H^{F}}\triv\rangle_{G^{F}}=2^{d}.
$$
In addition, $\langle R(c(Z,\Phi_{Z},\hat{\Phi}_{Z})), R(c(Z,\Phi_{Z},\hat{\Phi}_{Z})) \rangle_{G^{F}}=2^{d}$. By Theorem~\ref{thm:uniq}, every unipotent representation in $R(c(Z,\Phi_{Z},\hat{\Phi}_{Z}))$ is distinguished. This completes the theorem.
\end{proof}

\begin{ex}
Let $G^{F}=\Sp_{4}(\BF_{q})$. We continue Example~\ref{ex:Sp4}. In this case, $\CS_{1}=\{\binom{2}{-},\binom{0,2}{1}\}$. Let $Z=\binom{0,2}{1}$ and  $\Phi_{Z}=\{(0,1)\}$. By~\eqref{eq:Sp4} and Theorem~\ref{thm:main},
$$
\CU(\Ind^{\Sp_{4n}(\BF_{q})}_{\Sp_{2n}(\BF_{q^{2}})}\triv)=\triv+\rho\binom{0,1}{2}+\theta_{10}.
$$
\end{ex}

\begin{ex}
Let $G^{F}=\Sp_{12}$ and  
\begin{align*}
\CS_{6}=&\left\{
\binom{0,4}{3}, \binom{0,2,4}{1,3}, \binom{0,2,3,4}{1,2,3},\binom{0,5}{2},\right.\\
&\left.
\binom{2,3}{2}, \binom{0,2,5}{1,2}, \binom{0,6}{1},\binom{6}{-} \right\}.
\end{align*}
Let $Z=\binom{0,5}{2}$ and $\Phi_{Z}=\{(0,2)\}$ and $\hat{\Phi}_{Z}=\emptyset$. We have
$$
R(c(Z,\Phi_{Z},\emptyset))=R\binom{0,5}{2}+R\binom{2,5}{0}=\rho\binom{0,5}{2}+\rho\binom{0,2}{5}.
$$
Hence these two unipotent representations are distinguished.

If $Z= \binom{0,2,4}{1,3}$ then $\Phi_{Z}=\{(0,1),(2,3)\}$ and $\hat{\Phi}_{Z}=\Phi_{Z}$.
We have
$$
R(c(Z,\Phi_{Z},\Phi_{Z}))=R\binom{0,2,4}{1,3}-R\binom{1,2,4}{0,3}-R\binom{0,3,4}{1,2}+R\binom{1,3,4}{0,2}.
$$
The decomposition of $R(c(Z,\Phi_{Z},\Phi_{Z}))$ is given in~\eqref{eq:Sp12} and each constituent is distinguished.

\end{ex}


\begin{thebibliography}{16}
\providecommand{\natexlab}[1]{#1}
\providecommand{\url}[1]{\texttt{#1}}
\expandafter\ifx\csname urlstyle\endcsname\relax
  \providecommand{\doi}[1]{doi: #1}\else
  \providecommand{\doi}{doi: \begingroup \urlstyle{rm}\Url}\fi

\bibitem[C93]{Car93}
Roger~W. Carter.
\newblock \emph{Finite groups of {L}ie type}.
\newblock Wiley Classics Library. John Wiley \& Sons Ltd., Chichester, 1993.
\newblock ISBN 0-471-94109-3.
\newblock Conjugacy classes and complex characters, Reprint of the 1985
  original, A Wiley-Interscience Publication.

\bibitem[DL76]{DL76}
P.~Deligne and G.~Lusztig.
\newblock Representations of reductive groups over finite fields.
\newblock \emph{Ann. of Math. (2)}, 103\penalty0 (1):\penalty0 103--161, 1976.
\newblock ISSN 0003-486X.

\bibitem[HM08]{HM08}
Jeffrey Hakim and Fiona Murnaghan.
\newblock Distinguished tame supercuspidal representations.
\newblock \emph{Int. Math. Res. Pap. IMRP}, \penalty0 (2):\penalty0 Art. ID
  rpn005, 166, 2008.
\newblock ISSN 1687-3017.

\bibitem[HW93]{HW93}
A.~G. Helminck and S.~P. Wang.
\newblock On rationality properties of involutions of reductive groups.
\newblock \emph{Adv. Math.}, 99\penalty0 (1):\penalty0 26--96, 1993.
\newblock ISSN 0001-8708.
\newblock \doi{10.1006/aima.1993.1019}.
\newblock URL \url{http://dx.doi.org/10.1006/aima.1993.1019}.

\bibitem[H03]{Hen03}
Anthony Henderson.
\newblock Symmetric subgroup invariants in irreducible representations of
  {$G^F$}, when {$G={\rm GL}_n$}.
\newblock \emph{J. Algebra}, 261\penalty0 (1):\penalty0 102--144, 2003.
\newblock ISSN 0021-8693.
\newblock \doi{10.1016/S0021-8693(02)00559-8}.
\newblock URL \url{http://dx.doi.org/10.1016/S0021-8693(02)00559-8}.

\bibitem[L77]{Lu77}
G.~Lusztig.
\newblock Irreducible representations of finite classical groups.
\newblock \emph{Invent. Math.}, 43\penalty0 (2):\penalty0 125--175, 1977.
\newblock ISSN 0020-9910.



\bibitem[L81]{Lu81}
George Lusztig.
\newblock Unipotent characters of the symplectic and odd orthogonal groups over
  a finite field.
\newblock \emph{Invent. Math.}, 64\penalty0 (2):\penalty0 263--296, 1981.
\newblock ISSN 0020-9910.
\newblock \doi{10.1007/BF01389170}.
\newblock URL \url{http://dx.doi.org/10.1007/BF01389170}.

\bibitem[L84]{Lu84}
George Lusztig.
\newblock \emph{Characters of reductive groups over a finite field}, volume 107
  of \emph{Annals of Mathematics Studies}.
\newblock Princeton University Press, Princeton, NJ, 1984.
\newblock ISBN 0-691-08350-9; 0-691-08351-7.


\bibitem[L88]{Lu88}
G.~Lusztig.
\newblock On the representations of reductive groups with disconnected centre.
\newblock \emph{Ast\'erisque}, \penalty0 (168):\penalty0 10, 157--166, 1988.
\newblock ISSN 0303-1179.
\newblock Orbites unipotentes et repr{\'e}sentations, I.


\bibitem[L90]{Lu90}
George Lusztig.
\newblock Symmetric spaces over a finite field.
\newblock In \emph{The {G}rothendieck {F}estschrift, {V}ol.\ {III}}, volume~88
  of \emph{Progr. Math.}, pages 57--81. Birkh{\"a}user Boston, Boston, MA,
  1990.
\newblock \doi{10.1007/978-0-8176-4576-2_3}.
\newblock URL \url{http://dx.doi.org/10.1007/978-0-8176-4576-2_3}.

\bibitem[L00]{Lu00}
G.~Lusztig.
\newblock {$G(F_q)$}-invariants in irreducible {$G(F_{q^2})$}-modules.
\newblock \emph{Represent. Theory}, 4:\penalty0 446--465, 2000.
\newblock ISSN 1088-4165.
\newblock \doi{10.1090/S1088-4165-00-00114-X}.
\newblock URL \url{http://dx.doi.org/10.1090/S1088-4165-00-00114-X}.

\bibitem[L02]{Lu02}
G.~Lusztig.
\newblock Rationality properties of unipotent representations.
\newblock \emph{J. Algebra}, 258\penalty0 (1):\penalty0 1--22, 2002.
\newblock ISSN 0021-8693.
\newblock \doi{10.1016/S0021-8693(02)00514-8}.
\newblock URL \url{http://dx.doi.org/10.1016/S0021-8693(02)00514-8}.
\newblock Special issue in celebration of Claudio Procesi's 60th birthday.

\bibitem[M95]{Mac95}
I.~G. Macdonald.
\newblock \emph{Symmetric functions and {H}all polynomials}.
\newblock Oxford Mathematical Monographs. The Clarendon Press Oxford University
  Press, New York, second edition, 1995.
\newblock ISBN 0-19-853489-2.
\newblock With contributions by A. Zelevinsky, Oxford Science Publications.

\bibitem[S68]{Sri68}
Bhama Srinivasan.
\newblock The characters of the finite symplectic group {${\rm Sp}(4,\,q)$}.
\newblock \emph{Trans. Amer. Math. Soc.}, 131:\penalty0 488--525, 1968.
\newblock ISSN 0002-9947.

\bibitem[Z10]{zhang10}
Lei Zhang.
\newblock Gelfand pairs {$(\mathrm{Sp}_{4n}(F),\mathrm{Sp}_{2n}(E))$}.
\newblock \emph{Journal of Number Theory}, 130\penalty0 (11):\penalty0
  2428--2441, November 2010.

\bibitem[Z13]{zhang13S}
Lei Zhang.
\newblock Distinguished tame supercuspidal representations of symmetric pairs
  $({\rm Sp}_{4n}(E),{\rm Sp}_{2n}(E))$.
\newblock \emph{preprint}, 2013.

\end{thebibliography}
\end{document}